\documentclass[11pt]{amsart}
\usepackage{hyperref}
\usepackage{graphicx}
\usepackage{amsmath}
\usepackage{amsfonts}
\usepackage{amssymb}
\usepackage{MnSymbol}
\usepackage{rotating}
\usepackage{amssymb}
\usepackage{gastex}
\usepackage[usenames]{color}
\usepackage[usenames,dvipsnames]{pstricks}
\usepackage{epsfig}
\usepackage{pst-grad} 
\usepackage{pst-plot} 
\vfuzz2pt 
\hfuzz2pt 
\newtheorem{thm}{Theorem}[section]
\newtheorem{cor}[thm]{Corollary}
\newtheorem{lem}[thm]{Lemma}
\newtheorem{prop}[thm]{Proposition}


\newcommand{\abs}[1]{\left\vert#1\right\vert}

\newcommand{\C}{minimal non-nilpotent}

\newcommand{\N}{\eta^{\ast}}


\begin{document}
\title[ $\N$-congruence]{The congruence $\N$ on semigroups}
\author{M.H. Shahzamanian}
\address{M.H. Shahzamanian\\ Centro de Matematica Universidade do Porto, Rua do Campo Alegre 687, 4169-007 Porto, Portugal}
\email{ m.h.shahzamanian@fc.up.pt}
\thanks{ 2010
Mathematics Subject Classification. Primary 20F19, 20M07,
Secondary: 20F18.
Keywords and phrases: nilpotent semigroup, nilpotent group, principal series, finite semigroup, pseudovariety.}

\begin{abstract}
In this paper we define a congruence $\N$ on semigroups. For the finite semigroups $S$,  $\N$ is the smallest congruence relation such that $S/\N$ is a nilpotent semigroup (in the sense of Mal'cev).  In order to study the congruence relation $\N$ on finite semigroups, we define a $\textbf{CS}$-diagonal finite regular Rees matrix semigroup.
We prove that, if $S$ is a $\textbf{CS}$-diagonal finite regular Rees matrix semigroup then $S/\N$ is inverse. Also, if $S$ is a completely regular finite semigroup, then $S/\N$ is a Clifford semigroup.

We show that, for every non-null principal factor $A/B$ of $S$, there is a special principal factor $C/D$ such that every element of $A\setminus B$ is $\N$-equivalent with some element of $C\setminus D$.
We call the principal factor $C/D$, the $\N$-root of $A/B$. All $\N$-roots are $\textbf{CS}$-diagonal.  If certain elements of $S$ act in the special way on the $\textbf{R}$-classes of a $\textbf{CS}$-diagonal principal factor then it is not an $\N$-root. Some of these results are also expressed in terms of pseudovarieties of semigroups.

\end{abstract}
\maketitle
\tableofcontents

\section{Introduction}\label{pre}

Freni in \cite{Fre}, for a semigroup $S$,  defined a congruence relation $\gamma^{\ast}$ as follows:\\
Let $\gamma_1 = \{(x, x)\mid x \in S\}$ and, for every integer $n > 1$, $x$ is in the relation $\gamma_n$ with $y$ if and only if there exist elements $z_1, \ldots, z_n$ in $S$ and a permutation $\sigma$ in symmetric group of order $n$ such that $x=\prod_{i=1}^{n}z_i$ and $y=\prod_{i=1}^{n}z_{\sigma(i)}$.
Let $\gamma^{\ast}$ be the transitive closure of $\bigcup_{n \geq 1}\gamma_n$.
He proved that the congruence $\gamma^{\ast}$ is the smallest congruence relation of $S$ such that the quotient $S/\gamma^{\ast}$ is a commutative semigroup.

Mal'cev \cite{Mal} and independently Neumann and Taylor \cite{Neu-Tay} have shown that nilpotent groups can be defined by using semigroup identities (that is, without using inverses). This leads to the notion of a nilpotent semigroup (in the sense of Mal'cev). 

For a semigroup $S$ with elements $x,y,z_{1},z_{2}, \ldots$ one recursively defines two sequences 
$$\lambda_n=\lambda_{n}(x,y,z_{1},\ldots, z_{n})\quad{\rm and} \quad \rho_n=\rho_{n}(x,y,z_{1},\ldots, z_{n})$$ by
$$\lambda_{0}=x, \quad \rho_{0}=y$$ and
$$\lambda_{n+1}=\lambda_{n} z_{n+1} \rho_{n}, \quad \rho_{n+1}=\rho_{n} z_{n+1} \lambda_{n}.$$ A semigroup is said to be \textit{nilpotent} (in the sense of Mal'cev \cite{Mal}) if there exists a positive integer $n$ such that 
$$\lambda_{n}(a,b,c_{1},\ldots, c_{n}) = \rho_{n}(a,b,c_{1},\ldots, c_{n})$$ for all $a,b$ in $S$ and $ c_{1}, \ldots , c_{n}$
in $S^{1}$. The smallest such $n$ is called the nilpotency class
of $S$. Clearly, null semigroups are nilpotent in the sense of Mal'cev. Recall
(\cite{Neu-Tay}) that a semigroup $S$ is said to be
\textit{Neumann-Taylor} (NT)  if, for some $n\geq 2$,
$$\lambda_n(a,b,1,c_2,\ldots,c_n)=\rho_n(a,b,1,c_2,\ldots,c_n)$$
for all $a,b \in S$ and $c_2,\ldots,c_n$ in $S^1$. A semigroup $S$ 
is said to be \textit{positively Engel} (PE) if, for  some $n\geq
2$,
$$\lambda_{n}(a,b,1,1,c,c^{2},\ldots ,c^{n-2})
=\rho_{n} (a,b,1,1,c,c^{2},\ldots ,c^{n-2})$$ for all $a,b$ in $S$
and $c\in S^{1}$. Recall that a pseudovariety of semigroups is a class of finite semigroups closed under taking subsemigroups, homomorphic images and finite direct products. It is easy to verify that the finite nilpotent semigroups, finite Neumann-Taylor semigroups and finite positively Engel semigroups, separately constitute pseudovarieties. We denoted them respectively by $\bf{MN}$, $\bf{NT}$ and $\bf{PE}$.


In this paper, a congruence $\N$ is defined that for a finite semigroup $S$, $\N$ is the smallest congruence relation of $S$ such that $S/\N$ is a nilpotent semigroup.  In order to study the congruence relation $\N$ on finite semigroups, we define a $\textbf{CS}$-diagonal finite regular Rees matrix semigroup and the quotient group $G_{\N}$ of $G/\N$ for the finite regular Rees matrix semigroup where $G$ is the maximal subgroup of it.
We prove that, if $S$ is a $\textbf{CS}$-diagonal finite regular Rees matrix semigroup with the maximal subgroup $G$ then $S/\N$ is an inverse Rees matrix semigroup with maximal subgroup $G_{\N}$. Also if $S$ is a finite completely simple semigroup, then $S/\N \cong G_{\N}$.
Moreover, if $S$ is a completely regular finite semigroup, then $S/\N$ is a Clifford semigroup.

In the section~\ref{approach}, we investigate the congruence relation $\N$ on finite semigroups through their principal series. We define the $\N$-root, for every non-null principal factor of a finite semigroup $S$. Let $S_p / S_{p+1}$ be a principal factor of $S$. If there exists an element in $S_p \setminus S_{p+1}$ such that there is an $\N$-relation between it and $\theta$, the $\N$-root of $S_p/ S_{p+1}$ is $\theta$.
Otherwise its $\N$-root is the $\textbf{CS}$-diagonal principal factor $S_{p'} / S_{p'+1}$ of $S$ such that there is an $\N$-relation between at least two elements of them and there is no $\N$-relation between any elements of the $\N$-root and $S_{p'+1}$. We prove that 
if $\theta$ is the $\N$-root of $S_p / S_{p+1}$, then the subset $S_p \setminus S_{p+1}$ is in the class of $\theta$ of $\N$.
All $\N$-roots are $\textbf{CS}$-diagonal. If certain elements of $S$ act in the special way on the $\textbf{R}$-classes of a $\textbf{CS}$-diagonal principal factor $S_p / S_{p+1}$, then $S_p / S_{p+1}$ is not an $\N$-root. 
Although it remains as open problem the conditions of acting of elements of $S \setminus S_{p+1}$ on the $\textbf{R}$-classes of $S_p / S_{p+1}$ when it is not its $\N$-root. In the special case, when $S$ is semisimple, the classes of $S/\N$ are $\{\theta\}$ (if $S$ has $\theta$) and the $\N$-classes of $\N$-root principal factors of $S$.

Finally we compare the pseudovariety $\textbf{MN}$ with the pseudovarieties $\textbf{BG}_{nil}$ and $\textbf{BI}$ and got that
$ \textbf{MN} \subset \textbf{BG}_{nil}$ but $\textbf{MN} \not\subset \textbf{BI}$ and $\textbf{BI} \not\subset \textbf{MN}$.


For standard notations and terminology of finite semigroups, refers to \cite{Cli}.
A completely $0$-simple finite semigroup $S$ is isomorphic with a
regular Rees matrix semigroup $\mathcal{M}^{0}(G, n,m;P)$, where
$G$ is a maximal subgroup of $S$, $P$ is the $m\times n$ sandwich
matrix with entries in $G^{\theta}$ and $n$ and $m$ are positive
integers. The nonzero elements of $S$ is denoted by $(g;i,j)$, where $g\in
G$, $1\leq i \leq n$ and $1\leq j\leq m$; the zero
element is simply denoted $\theta$. The element  of $P$ on the
$(j,i)$-position is denoted by $p_{ji}$. The set of non zero elements is
denoted by $\mathcal{M} (G,n,m;P)$. If all elements of $P$ are
non zero then this is a semigroup and every completely simple
finite semigroup is of this form. If $P=I_{n}$, the identity
matrix, then $S$ is an inverse semigroup. Jespers and Okninski prove that a completely $0$-simple semigroup
$\mathcal{M}^{0}(G,n,m;P)$ is nilpotent if and only if $n=m$,
$P=I_{n}$ and $G$ is a nilpotent group [\cite{Jes-Okn}, Lemma 2.1].

Let $S=\mathcal{M}^0(G,A,B;P)$ be a Rees matrix semigroup. We assume $A$ and $B$ are disjoint. The incidence graph of $S$, denoted $\Gamma(S)$, has vertex set $V = A \cup B$. The edge set is given by $$E = \{(a, b), (b, a) \in (A \times B) \cup (B \times A) \mid p_{b,a}\neq \theta\}.$$

\section{The relation $\N$ on finite semigroups}

The following lemma gives a necessary and sufficient condition for a finite semigroup not to be nilpotent \cite{Jes-Sha}.

\begin{lem} \label{finite-nilpotent}
A finite semigroup $S$ is not nilpotent if and only if there exists a positive integer $m$, distinct elements $x, y\in S$ and elements $w_{1}, w_{2}, \ldots, w_{m}\in S^{1}$ such that $x = \lambda_{m}(x, y, w_{1}, w_{2}, \ldots, w_{m})$, $y = \rho_{m}(x,y, w_{1}, w_{2}, \ldots, w_{m})$.
\end{lem}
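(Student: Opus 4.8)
The plan is to reduce everything to a single algebraic identity relating the $\lambda$'s and $\rho$'s at different indices, and then run a pumping argument inside the finite set $S\times S$. The identity I would record first is the \emph{concatenation identity}: for all $a,b\in S$, all $c_1,\dots,c_{i+t}\in S^1$ and all $i,t\ge 0$,
\[
\lambda_{i+t}(a,b,c_1,\dots,c_{i+t}) = \lambda_t\bigl(\lambda_i(a,b,c_1,\dots,c_i),\,\rho_i(a,b,c_1,\dots,c_i),\,c_{i+1},\dots,c_{i+t}\bigr),
\]
together with the analogous statement having $\rho$ in place of $\lambda$ on the left; both follow simultaneously by an easy induction on $t$ from $\lambda_{n+1}=\lambda_n z_{n+1}\rho_n$, $\rho_{n+1}=\rho_n z_{n+1}\lambda_n$. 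It is convenient to restate this via the maps $f_w\colon S\times S\to S\times S$, $(x,y)\mapsto(xwy,ywx)$ for $w\in S^1$: iterating $f_{w_1},f_{w_2},\dots$ from $(a,b)$ produces exactly the pairs $\bigl(\lambda_k(a,b,w_1,\dots,w_k),\rho_k(a,b,w_1,\dots,w_k)\bigr)$, and composing along a word equals $f$ of the concatenated word.

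For the ``if'' direction, suppose distinct $x,y$ and $w_1,\dots,w_m\in S^1$ satisfy $x=\lambda_m(x,y,w_1,\dots,w_m)$ and $y=\rho_m(x,y,w_1,\dots,w_m)$, and assume for contradiction that $S$ is nilpotent of class $n$. A short induction on $N$ using the recursion shows that $\lambda_n=\rho_n$ (as an identity) forces $\lambda_N=\rho_N$ for every $N\ge n$. Applying the concatenation identity $k$ times to the hypothesis yields $x=\lambda_{km}(x,y,w_1,\dots,w_m,\dots,w_1,\dots,w_m)$, and likewise for $y$ with $\rho$, for every $k\ge 1$. Choosing $k$ with $km\ge n$ gives $x=\lambda_{km}(\cdots)=\rho_{km}(\cdots)=y$, contradicting $x\neq y$; hence $S$ is not nilpotent.

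For the ``only if'' direction, the key observation is that the diagonal $\Delta=\{(s,s):s\in S\}$ is \emph{absorbing}: $f_w(\Delta)\subseteq\Delta$ for every $w\in S^1$. Assume $S$ is not nilpotent, so for every $m$ the identity $\lambda_m=\rho_m$ fails, giving $a,b\in S$ and $w_1,\dots,w_m\in S^1$ with $\bigl(\lambda_m(a,b,w_1,\dots,w_m),\rho_m(a,b,w_1,\dots,w_m)\bigr)\notin\Delta$. By the concatenation identity this pair equals $f_{w_m}\circ\cdots\circ f_{w_1}(a,b)$, and since $\Delta$ is absorbing, \emph{all} the intermediate pairs $P_k=f_{w_k}\circ\cdots\circ f_{w_1}(a,b)$ for $0\le k\le m$ must lie in $\Omega:=(S\times S)\setminus\Delta$. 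Hence the finite directed graph on vertex set $\Omega$ with an edge $(p,q)\to f_w(p,q)$ whenever $f_w(p,q)\in\Omega$ admits walks of every length. Taking $m$ larger than $|\Omega|$ and applying the pigeonhole principle to $P_0,\dots,P_m$ produces $i<j$ with $P_i=P_j=:(x,y)\in\Omega$; setting $m'=j-i$ and $(w'_1,\dots,w'_{m'})=(w_{i+1},\dots,w_j)$, the concatenation identity gives $x=\lambda_{m'}(x,y,w'_1,\dots,w'_{m'})$ and $y=\rho_{m'}(x,y,w'_1,\dots,w'_{m'})$ with $x\neq y$, as required.

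The only genuine subtlety — the step I expect to require the most care — is ensuring that the pair fixed by the pumping argument is really off the diagonal; this is exactly what the absorbing property of $\Delta$ delivers, provided one keeps track of the fact that the entire orbit $P_0,\dots,P_m$ remains in $\Omega$. Everything else is routine induction on the recursion defining $\lambda_n$ and $\rho_n$.
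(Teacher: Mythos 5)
Your proof is correct. The paper does not actually prove this lemma --- it quotes it from \cite{Jes-Sha} --- but your argument (the concatenation identity for $\lambda$ and $\rho$, the absorbing diagonal in $S\times S$, and pigeonhole on the off-diagonal pairs) is the standard one and is exactly the style of finiteness argument the paper itself runs inside the proof of Lemma~\ref{smallest-congruence}, so there is nothing to flag.
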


This lemma motivates the definition of a congruence relation $\N$ on finite semigroups which is refinement of $\gamma^{\ast}$.

Let $S$ be a semigroup.
Let $\eta_1 = \{(x, x)\mid x \in S\}$ and, for every integer $n > 1$, $x$ has a relation $\eta_n$ with $y$ if and only if there exist elements $z_1, \ldots, z_n$ in $S^1$  such that $x=\lambda_{n}(x,y,z_{1},\ldots, z_{n})$ and $y=\rho_{n}(x,y,z_{1},\ldots, z_{n})$.
Let $\eta$ be the transitive closure of $\bigcup_{i \geq 1}\eta_i$ and  $\N$ the smallest congruence relation including $\eta$.

\begin{lem} \label{smallest-congruence}
Let $S$ be a finite semigroup. The relation $\N$ is the smallest congruence relation of $S$ such that $S/\N$ is a nilpotent semigroup.
\end{lem}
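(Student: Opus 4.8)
The plan is to verify two things: first, that $S/\N$ is nilpotent, and second, that $\N$ is contained in every congruence $\rho$ on $S$ for which $S/\rho$ is nilpotent. Together these give minimality.

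For the first part, I would argue by contradiction using Lemma~\ref{finite-nilpotent}. Suppose $S/\N$ is not nilpotent. Since $S$ is finite, so is $S/\N$, so Lemma~\ref{finite-nilpotent} provides a positive integer $m$, distinct classes $\bar x \neq \bar y$ in $S/\N$, and elements $\bar w_1, \ldots, \bar w_m \in (S/\N)^1$ with $\bar x = \lambda_m(\bar x, \bar y, \bar w_1, \ldots, \bar w_m)$ and $\bar y = \rho_m(\bar x, \bar y, \bar w_1, \ldots, \bar w_m)$. Lifting to representatives $x, y \in S$ and $w_1, \ldots, w_m \in S^1$ (lifting the identity of $(S/\N)^1$ to $1 \in S^1$ when needed), the two displayed equalities hold modulo $\N$; that is, $\lambda_m(x,y,w_1,\ldots,w_m) \mathrel{\N} x$ and $\rho_m(x,y,w_1,\ldots,w_m) \mathrel{\N} y$. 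The point is now to replace $x,y$ by genuine fixed points. A clean way: set $a = \lambda_m(x,y,w_1,\ldots,w_m)$ and $b = \rho_m(x,y,w_1,\ldots,w_m)$, so $a \mathrel{\N} x$ and $b \mathrel{\N} y$; iterating the $\lambda$/$\rho$ construction and using finiteness, one reaches (after passing to a suitable power, exploiting that the maps involved stabilize on a finite set) elements $a', b'$ with $a' = \lambda_k(a',b',\ldots)$, $b' = \rho_k(a',b',\ldots)$ on the nose, for some $k$ and some word entries, while still $a' \mathrel{\N} x$, $b' \mathrel{\N} y$. By the definition of $\eta_k$ we then get $(a', b') \in \eta_k \subseteq \eta \subseteq \N$, hence $\bar x = \bar y$, a contradiction. (If $a' = b'$ already the same conclusion follows, since $a' \mathrel{\N} x$ and $b' \mathrel{\N} y$ force $\bar x = \bar y$.) I expect this ``promotion from $\N$-approximate fixed point to exact fixed point'' to be the main obstacle, and the right tool is the standard finiteness trick: the sequence of pairs obtained by repeatedly applying $\lambda_m(-,-,w_1,\dots,w_m)$ and $\rho_m(-,-,w_1,\dots,w_m)$ lives in the finite set $S \times S$, so some iterate is idempotent for this operation, yielding an exact fixed pair congruent to the original one.

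For the second part, let $\rho$ be any congruence with $S/\rho$ nilpotent, of nilpotency class $n$, say. It suffices to show $\eta \subseteq \rho$, because $\N$ is by definition the smallest congruence containing $\eta$, and $\rho$ is a congruence. Since $\rho$ is a congruence, it is transitive, so it is enough to show $\eta_i \subseteq \rho$ for each $i$. Take $(x,y) \in \eta_i$, so there are $z_1, \ldots, z_i \in S^1$ with $x = \lambda_i(x,y,z_1,\ldots,z_i)$ and $y = \rho_i(x,y,z_1,\ldots,z_i)$. Projecting to $S/\rho$ and iterating the defining recursion for $\lambda$, $\rho$ (substituting the fixed-point identities $x \equiv \lambda_i(\ldots)$, $y \equiv \rho_i(\ldots)$ back into themselves), one pushes the index up from $i$ to any multiple, in particular past $n$; since $S/\rho$ has nilpotency class $n$ one gets $\bar\lambda_N(\bar x, \bar y, \cdots) = \bar\rho_N(\bar x, \bar y, \cdots)$ for $N \geq n$, and after unwinding the substitutions this collapses to $\bar x = \bar y$ in $S/\rho$, i.e.\ $(x,y) \in \rho$. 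Hence $\eta \subseteq \rho$ and therefore $\N \subseteq \rho$.

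Finally, combining the two parts: $S/\N$ is nilpotent by the first part, and $\N$ is below every congruence with nilpotent quotient by the second part, so $\N$ is the smallest such congruence, as claimed. I would present the iteration lemma — that an $\eta_i$-relation forces, modulo any nilpotent-quotient congruence, higher $\lambda_N = \rho_N$ identities that degenerate to $\bar x = \bar y$ — as a short separate computation, since the same bookkeeping is reused in both halves of the argument.
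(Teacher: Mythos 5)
Your proposal is correct and follows essentially the same route as the paper: the same finiteness trick on $S\times S$ to promote $\N$-approximate fixed points to exact ones (yielding an $\eta_k$-pair and hence a contradiction), and the same observation that an $\eta_n$-fixed-point identity collapses in any nilpotent quotient. The only cosmetic difference is that in the minimality half you unwind the iteration in $S/\rho$ directly, where the paper simply invokes Lemma~\ref{finite-nilpotent} to say the generated subsemigroup would be non-nilpotent.
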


\begin{proof}
First we prove that $S/\N$ is nilpotent. 

Suppose the contrary. Since $S/\N$ is finite, by Lemma~\ref{finite-nilpotent}, there exist distinct classes $\N(x), \N(y)\in S/\N$ and elements $\N(w_{1}), \N(w_{2}), \ldots, \N(w_{m})\in (S/\N)^{1}$ such that $$\N(x) = \lambda_{m}(\N(x), \N(y), \N(w_{1}), \N(w_{2}), \ldots, \N(w_{m})),$$ $$\N(y) = \rho_{m}(\N(x),\N(y), \N(w_{1}), \N(w_{2}), \ldots, \N(w_{m})).$$
Then for every elements $x'\in\N(x), y'\in\N(y)$, there exists element $x''\in\N(x), y''\in\N(y)$, such that $$x'' = \lambda_{m}(x', y', w_{1}, w_{2}, \ldots, w_{m}) \mbox{~and~} y'' = \rho_{m}(x',y', w_{1}, w_{2}, \ldots, w_{m}).$$
Since $\abs{S\times S}$ is finite, there exist elements $\alpha\in\N(x), \beta\in\N(y)$, an integer $k=k'm$ and elements $z_1,\ldots,z_k\in S^1$ such that $$\alpha = \lambda_{k}(\alpha, \beta, z_{1}, z_{2}, \ldots, z_{k}) \mbox{~and~} \beta = \rho_{k}(\alpha,\beta, z_{1}, z_{2}, \ldots, z_{k}).$$ Then $\N(\alpha)=\N(\beta)$, a contradiction.

Now suppose that $\rho$ is a congruence relation on $S$ such that $R = S/\rho$ is nilpotent and $x \eta_n y$ for some $x,y \in S$ and  $n \in \mathbb{N}$. Thus there exist elements $z_1, \ldots, z_n$ in $S^1$  such that $$x=\lambda_{n}(x,y,z_{1},\ldots, z_{n}) \mbox{~and~} y=\rho_{n}(x,y,z_{1},\ldots, z_{n}).$$ If $\rho(x) \neq \rho(y)$, then Lemma~\ref{finite-nilpotent} yields $$\langle \rho(x), \rho(y), \rho(z_1), \ldots, \rho(z_n) \rangle$$ is a non-nilpotent subsemigroup  of $R$ and thus $R$ is not nilpotent, a contradiction. Therefore  $x \rho y$ and thus $\N \subseteq \rho$. 
\end{proof}

Neumann and Taylor prove that a group $G$ is nilpotent if and only if it is nilpotent in the sense of Mal'cev [\cite{Neu-Tay}, Corollary 1].

{\bf Remark.} Aghabozorgi, Davvaz and Jafarpour in~\cite{Agh-Dav-Jaf} introduce the smallest equivalence relation $\nu^{\ast}$ on a hypergroup $H$.
We mention this definition limited to the case of semigroups.

Let $L_0(H) = H$ and 
$$L_{k+1}(H) = \{h \in H \mid xy=hyx \mbox{ such that } x \in L_k(H) \mbox{ and } y \in H\},$$
for all $k \geq 0$. Suppose that $n \in \mathbb{N}$ and $\nu_n = \bigcup_{m>1} \nu_{m,n}$, where $\nu_{1,n}$ is the diagonal relation and for
every integer $m \geq 1$, $\nu_{m,n}$ is the relation defined as follows:
$$x \nu_{m,n} y \Leftrightarrow \exists (z_1, \ldots, z_m) \in H^m, \exists \sigma \in \mathbb{S}_m : \sigma(i) = i \mbox{ if } z_i  \notin L_n(H)$$ 
$$\mbox{ such that }x = \prod^m_{i=1}z_i \mbox{ and } y = \prod^m_{i=1}z_{\sigma(i)}.$$
Let $\nu^{\ast}_n$ be the transitive
closure of $\nu_n$ and  the relation $\nu^{\ast}$ as follows:
$$\nu^{\ast} = \bigcap_{n \geq 1} \nu^{\ast}_n.$$

They prove that the relation $\nu^{\ast}$ is the smallest congruence relation on a finite group $H$ such that $H/\nu^{\ast}$
is a nilpotent group. Thus Lemma~\ref{smallest-congruence} and Corollary 1 of \cite{Neu-Tay}, follow that $\nu^{\ast}=\N$ for the case of finite groups. 

In general, the relations $\nu^{\ast}$ and $\N$ are not equal. For example, let $M$ be a regular Rees matrix semigroup $M=\mathcal{M}^{0}(\{1\},2,2;I_2)$. Based of Lemma 2.1 of~\cite{Jes-Okn}, $M$ is nilpotent.
So $M / \N=M$ (Lemma~\ref{smallest-congruence}). 

Since $(1;1,1),(1;2,2)\in L_i(M)$ for all $1 \leq i$,
$$(1;1,1)= (1;1,2)(1;2,2)(1;2,1)(1;1,1) \mbox{~and}$$
$$\theta= (1;1,2)(1;1,1)(1;2,1)(1;2,2),$$
we have $(1;1,1)\nu^{\ast} \theta$. Then $M / \nu^{\ast} \neq M$ and thus $M / \N \neq M / \nu^{\ast}.$ 

Let $M= \mathcal{M}^0(G,n,m;P)$
be a finite regular Rees matrix semigroup with the sandwich matrix $P$. We call $M$ is $\textbf{CS}$-diagonal, if
$p_{r,t}$, $p_{r',t}$ and $p_{r,t'}$ are non zero then $p_{r',t'}$ is non zero
for all $1 \leq r,r'\leq m$ and $1 \leq t,t'\leq n$. 

The above assumption let us to define the following equivalence relation on the sets $\{1,\ldots,n\}$. The integers $1 \leq i_{\alpha}, i_{\beta}\leq n$ are in the same class if and only if there exists an integer $1 \leq j \leq m$ such that $p_{j,i_{\alpha}}$ and $p_{j,i_{\beta}}$ are non zero. Suppose that the set $\{1,\ldots,n\}$ partition to $t$ classes $\mathbb{I}_1, \ldots, \mathbb{I}_t$. We define the set $\mathbb{I'}_l$ for $1 \leq l \leq t$. If $p_{j,i} \neq \theta$ for $i \in \mathbb{I}_l$, then $j \in \mathbb{I'}_l$.
It is easy to verify that the set $\{1,\ldots,m\}$ partitions to the $t$ classes $\mathbb{I'}_l$ for $1 \leq l \leq t$. Also
it is easy to verify that if $i \in \mathbb{I}_l$ and $j \in \mathbb{I'}_l$ for $1 \leq l \leq t$ then $p_{j,i} \neq \theta$ and 
if $i \in \mathbb{I}_g$, $j \in \mathbb{I'}_h$ and $g \neq h$ then $p_{j,i} = \theta$, for every $1 \leq g,h \leq t$. We show the number of classes of $\{1,\ldots,n\}$, by $n_{\N}$.
\psscalebox{1.0 1.0} 
{
\begin{pspicture}(0,-2.9906626)(12.94447,2.9906626)
\rput[bl](10.496289,1.6804){Matrix $P$}
\psline[linecolor=black, linewidth=0.08](2.0235295,2.0228193)(1.6705883,2.0306625)(1.6705883,-2.9497297)(2.0235295,-2.941573)
\psline[linecolor=black, linewidth=0.08](10.207541,2.1072586)(9.8546,2.1152778)(10.207541,2.1072586)(10.207541,-2.9689574)
\psframe[linecolor=black, linewidth=0.06, linestyle=dotted, dotsep=0.10583334cm, dimen=outer](3.9397647,1.8970155)(1.8117647,0.95301545)
\psframe[linecolor=black, linewidth=0.06, linestyle=dotted, dotsep=0.10583334cm, dimen=outer](4.803765,0.92101544)(3.9877646,-0.26298457)
\psframe[linecolor=black, linewidth=0.06, linestyle=dotted, dotsep=0.10583334cm, dimen=outer](7.251765,-0.27898455)(4.8677645,-0.8229846)
\psframe[linecolor=black, linewidth=0.06, linestyle=dotted, dotsep=0.10583334cm, dimen=outer](10.079059,-1.6389846)(8.8310585,-2.8549845)
\rput[bl](7.756235,-1.3509846){$\ldots$}
\rput[bl](2.4696472,2.3008978){$\mathbb{I}_1$}
\rput[bl](4.245647,2.3008978){$\mathbb{I}_2$}
\rput[bl](5.763765,2.3008978){$\mathbb{I}_3$}
\rput[bl](9.304471,2.1784272){$\mathbb{I}_{n_{\N}}$}
\rput[bl](0.9204706,1.4066625){$\mathbb{I'}_1$}
\rput[bl](0.9204706,0.32525074){$\mathbb{I'}_2$}
\rput[bl](0.9204706,-0.6959257){$\mathbb{I'}_3$}
\rput[bl](0.9204706,-2.206514){$\mathbb{I'}_{n_{\N}}$}
\psline[linecolor=black, linewidth=0.06, linestyle=dotted, dotsep=0.10583334cm, tbarsize=0.07055555555555555cm 5.0]{|-|}(0.49411765,2.0541918)(0.47058824,-3.0046315)
\psline[linecolor=black, linewidth=0.06, linestyle=dotted, dotsep=0.10583334cm, tbarsize=0.07055555555555555cm 5.0]{|-|}(1.6310588,2.7130156)(10.141176,2.7130156)
\rput[bl](2.4235294,2.8306625){$n$}
\rput[bl](0.0,0.8071331){$m$}
\psline[linecolor=black, linewidth=0.08](10.188235,-2.9340434)(9.764706,-2.9340434)
\end{pspicture}
}


Suppose that $S$ is a semigroup. 
The subsemigroup $\langle E(S)\rangle$ denotes the subsemigroup generated by $E(S)$, the set of idempotents of $S$. For a pseudovariety $\textbf{V}$, recall from \cite{Alm} the pseudovariety $$\textbf{EV} = \{S \in \textbf{S}\mid\langle E(S)\rangle \in \textbf{V}\}.$$
The pseudovariety $\textbf{DS}$ can be specified, $S \not\in \textbf{DS}$ if and only if there exist idempotents $e, f \in S$ in the same $\textbf{J}$-class such that $ef$ and $fe$ are not both in that $\textbf{J}$-class (\cite[Exercise 8.1.6]{Alm}).
Then $$\textbf{EDS}= \{S \in \textbf{S}\mid\langle E(S)\rangle \in \textbf{DS}\}.$$
It is well known that (for example \cite[Exercise 4.13.38]{Rho-Ste}), a semigroup $S$ belongs to $\textbf{EDS}$ if and only if, for each regular $\textbf{J}$-class $J$, the connected components of the incidence graph $\Gamma(J^0)$ are complete bipartite graphs. Suppose that the finite regular Rees matrix semigroup $M= \mathcal{M}^0(G,n,m;P)$ is $\textbf{CS}$-diagonal. Since the connected component $\mathbb{I}_i\cup \mathbb{I'}_i$ is a complete bipartite graph for every $1\leq i \leq n_{\N}$, $M\in \textbf{EDS}$.

Let $G_{\N}$ be the smallest quotient group of $G/\N$ such that for every integers $1 \leq a,b \leq n_{\N}$ $$p_{x,c'}^{-1}p_{x,c}=p_{\beta,c'}^{-1}p_{\beta,c} \mbox{ and } p_{d,y}p_{d',y}^{-1}=p_{d,\alpha}p_{d',\alpha}^{-1}$$ for every integers $c,c' \in \mathbb{I}_a$, $\beta,x \in \mathbb{I'}_a$, $\alpha, y \in \mathbb{I}_b$ and $d, d' \in \mathbb{I'}_b$. We denote its quotient map by $\phi_{\N}:G/\N \rightarrow G_{\N}$, 
the equivalence relation class of $p_{\beta,c'}^{-1}p_{\beta,c}$ by $\beta_{c',c}$ and the equivalence relation class of $p_{d,\alpha}p_{d',\alpha}^{-1}$ by $\alpha_{d',d}$ in $G_{\N}$.

It is easy to verify that for every $1 \leq a,b \leq n_{\N}$ $$\beta_{c',c'}=\beta_{c',c}\beta_{c,c'}=1,\alpha_{d,d}=\alpha_{d',d}\alpha_{d,d'}=1,$$
$$\beta_{c'',c'}\beta_{c',c}=\beta_{c'',c} \mbox{ and }\alpha_{d',d}\alpha_{d'',d'}=\alpha_{d'',d}$$
for every integers $c,c',c'' \in \mathbb{I}_a$, and $d, d',d'' \in \mathbb{I'}_b$.

\begin{prop} \label{regular Rees matrix semigroup}
Let $M= \mathcal{M}^0(G,n,m;P)$
be a finite regular Rees matrix semigroup with the sandwich matrix $P$. If $M$ is not $\textbf{CS}$-diagonal, then $M/\N= \{\theta\}$.
Otherwise $M/\N \cong \mathcal{M}^0(G_{\N},n_{\N},n_{\N};I_{n_{\N}})$.
\end{prop}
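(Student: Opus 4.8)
The plan is to split into the two cases stated and, in the main (CS-diagonal) case, to exhibit an explicit surjective homomorphism $\psi:M\to\mathcal{M}^0(G_{\N},n_{\N},n_{\N};I_{n_{\N}})$ whose induced congruence is exactly $\N$. First I would dispose of the non-CS-diagonal case: if $M$ is not $\textbf{CS}$-diagonal, choose indices witnessing the failure, i.e. $p_{r,t},p_{r',t},p_{r,t'}$ nonzero but $p_{r',t'}=\theta$. Using products of three elements of $M$ together with the Rees-matrix multiplication rule, one produces for an arbitrary nonzero $(g;i,j)$ a pair of words of the form $\lambda_m,\rho_m$ (with entries in $M^1$) that evaluate to $(g;i,j)$ and to $\theta$ respectively; this is the same phenomenon already illustrated in the Remark with $M=\mathcal{M}^0(\{1\},2,2;I_2)$ for the relation $\nu^{\ast}$, adapted here to $\eta_m$. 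Hence every nonzero element is $\eta$-related to $\theta$, so $M/\N=\{\theta\}$.

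For the CS-diagonal case, recall that the index sets partition as $\{1,\dots,n\}=\bigsqcup_{l=1}^{n_{\N}}\mathbb{I}_l$ and $\{1,\dots,m\}=\bigsqcup_{l=1}^{n_{\N}}\mathbb{I}'_l$ with $p_{j,i}\neq\theta$ iff $i,j$ lie in blocks of the same index $l$. I would define $\psi(\theta)=\theta$ and, for a nonzero element $(g;i,j)$ with $i\in\mathbb{I}_a$ and $j\in\mathbb{I}'_b$ (note then $a$ need not equal $b$), set
\[
\psi(g;i,j)=\bigl(\,\alpha_{d_0,j}\cdot\phi_{\N}(\eta^\ast(g))\cdot\beta_{i,c_0}\,;\,a,b\,\bigr),
\]
where $c_0\in\mathbb{I}_a$ and $d_0\in\mathbb{I}'_b$ are fixed basepoints chosen once per block, and $\alpha_{d_0,j},\beta_{i,c_0}$ are the designated classes in $G_{\N}$ introduced before the proposition (so the formula normalises the row/column coordinates to the basepoints). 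The first task is to check $\psi$ is a homomorphism: computing $(g;i,j)(h;k,l)=(g\,p_{j,k}\,h;i,l)$ and applying $\psi$, one must show the $G_{\N}$-coordinate matches $\psi(g;i,j)\,p'_{b,a'}\,\psi(h;k,l)$ where $p'=I_{n_{\N}}$; since $p'$ is the identity matrix this forces $j$ and $k$ to be in the same block (i.e. $b=a'$) for the product to be nonzero, which is exactly when $p_{j,k}\neq\theta$, and the relations $\beta_{c'',c'}\beta_{c',c}=\beta_{c'',c}$, $\alpha_{d',d}\alpha_{d'',d'}=\alpha_{d'',d}$, together with the defining identities $p_{x,c'}^{-1}p_{x,c}=p_{\beta,c'}^{-1}p_{\beta,c}$ and $p_{d,y}p_{d',y}^{-1}=p_{d,\alpha}p_{d',\alpha}^{-1}$ of $G_{\N}$, make the two coordinates agree. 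Surjectivity is clear since $\phi_{\N}$ is onto and the basepoint classes realise every coordinate.

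It then remains to identify $\ker\psi$ with $\N$. One inclusion, $\N\subseteq\ker\psi$, follows from Lemma~\ref{smallest-congruence} once I verify that $\mathcal{M}^0(G_{\N},n_{\N},n_{\N};I_{n_{\N}})$ is nilpotent — which is immediate from the Jespers--Okninski criterion (\cite{Jes-Okn}, Lemma 2.1) since the sandwich matrix is the identity and $G_{\N}$, being a quotient of $G/\N$, is nilpotent (Neumann--Taylor, \cite{Neu-Tay}, Corollary 1, applied to $G/\N$ which is nilpotent as a group). The reverse inclusion $\ker\psi\subseteq\N$ is the crux and the step I expect to be the main obstacle: given $\psi(g;i,j)=\psi(h;k,l)$ I must produce an explicit $\eta$-chain from $(g;i,j)$ to $(h;k,l)$. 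This breaks into showing (a) any two elements in the same $\mathbf{R}$-class with equal $G/\N$-normalised data are $\eta$-related — using that within a block $\mathbb{I}_a\cup\mathbb{I}'_a$ all sandwich entries are nonzero so one can freely ``move'' the column coordinate via idempotent multiplications of the kind in Lemma~\ref{finite-nilpotent}; (b) the group-level reductions defining $G_{\N}$ from $G/\N$ (the identifications $p_{x,c'}^{-1}p_{x,c}\sim p_{\beta,c'}^{-1}p_{\beta,c}$, etc.) are each realised by an $\eta$-relation inside $M$, i.e. conjugating an element of the maximal subgroup $G$ around a rectangular band of $\mathbf H$-classes and tracking how the sandwich entries enter as in the standard coordinatisation; and (c) collapsing $G$ to $G/\N$ is realised by $\eta$-relations, which is exactly the group case $\nu^{\ast}=\N$ recorded in the Remark, transported into $M$ via a fixed group-coordinate $\mathbf H$-class. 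Assembling (a), (b), (c) along the partition into blocks gives the chain, completing $\ker\psi=\N$ and hence the isomorphism $M/\N\cong\mathcal{M}^0(G_{\N},n_{\N},n_{\N};I_{n_{\N}})$.
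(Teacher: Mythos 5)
Your overall architecture for the $\textbf{CS}$-diagonal case (a surjective homomorphism onto $\mathcal{M}^0(G_{\N},n_{\N},n_{\N};I_{n_{\N}})$, one inclusion of congruences from nilpotency of the target via Lemma~\ref{smallest-congruence} and the Jespers--Okninski criterion, the other from explicit $\eta$-chains) coincides with the paper's, which packages the same map as a congruence $\kappa$. But there are two genuine problems. First, your mechanism for the non-$\textbf{CS}$-diagonal case fails: no nonzero element can be $\eta$-related to $\theta$. Indeed, if $y=\theta$ then $\lambda_1(x,\theta,z_1)=xz_1\theta=\theta$, hence $\lambda_n(x,\theta,z_1,\ldots,z_n)=\theta$ for all $n\geq 1$, so the defining condition $x=\lambda_n(x,\theta,z_1,\ldots,z_n)$ forces $x=\theta$. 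The Remark you invoke makes precisely the opposite point: for $\mathcal{M}^0(\{1\},2,2;I_2)$ the permutation relation $\nu^{\ast}$ collapses $(1;1,1)$ to $\theta$ while $\N$ does not, so that computation cannot be ``adapted to $\eta_m$''. The correct route is to first exhibit an $\eta_2$-relation between two distinct \emph{nonzero} elements, namely $(p_{j,i}^{-1};i,j)\,\eta_2\,(p_{j',i}^{-1};i,j')$ when $p_{j,i},p_{j',i}\neq\theta$ (take $z_1=z_2=1$), and only then use that $\N$ is the \emph{congruence} generated by $\eta$: multiplying both sides on the right by $(p_{j,i'}^{-1}p_{j,i};i',j)$, where $p_{j',i'}=\theta$ and $p_{j,i'}\neq\theta$, sends one side to $(1;i,j)$ and the other to $\theta$.

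Second, in the $\textbf{CS}$-diagonal case the inclusion $\ker\psi\subseteq\N$, which you yourself identify as the crux, is left as a plan (items (a)--(c)) rather than carried out, and the explicit formula you give for $\psi$ has the normalising factors in the wrong order: with the column datum $\alpha_{d_0,j}$ on the outer left and the row datum $\beta_{i,c_0}$ on the outer right, the product $\psi(g;i,j)\psi(h;k,l)$ places $\beta_{i,c_0}$ and $\alpha_{d_0',l}$ between the two group coordinates, where they cannot absorb $\phi_{\N}(p_{j,k})$ when $G_{\N}$ is nonabelian; the row normaliser must sit on the outer left and the column normaliser on the outer right, as in the paper's $\beta_{a_{\N},c}\,\phi_{\N}(g)\,\alpha_{b'_{\N},d}$, so that the inner pair $\alpha_{e'_{\N},d}\,\phi_{\N}(p_{e'_{\N},e_{\N}})\,\beta_{e_{\N},c'}$ collapses to $\phi_{\N}(p_{d,c'})$. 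The missing $\eta$-chains for the crux are again of the $z_1=z_2=1$ type above: two elements in a common $\mathbf{R}$-class whose column indices lie in one block $\mathbb{I'}_a$ are $\eta_2$-related, dually for rows, and composing these with the congruence property yields $(g;c,d)\,\N\,(p_{e',c'}^{-1}p_{e',c}\,g\,p_{d,f}p_{d',f}^{-1};c',d')$, which realises the generators of the kernel. So the skeleton is right, but both places where the actual work happens are either wrong or missing.
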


\begin{proof}
First suppose that $M$ is nilpotent. Then $n=m$ and $$M \cong\mathcal{M}^0(G,n,n;I_n),$$ where $I_n$ denotes the
identity $n$-by-$n$ matrix [\cite{Jes-Okn}, Lemma 2.1]. Since $M$ is nilpotent then Lemma~\ref{smallest-congruence} follows that  $M=M/\N$ and thus  $M/\N \cong \mathcal{M}^0(G,n,n;I_n)$.

Suppose that $M$ is not nilpotent. If $p_{j,i}$ and $p_{j',i}$ are non zero and $j \neq j'$ for some $1 \leq i \leq n$ and $1 \leq j,j' \leq m$, then
$$(p_{j,i}^{-1};i,j) = \lambda_2((p_{j,i}^{-1};i,j),(p_{j',i}^{-1};i,j'),1,1),$$
$$(p_{j',i}^{-1};i,j') = \rho_2((p_{j,i}^{-1};i,j),(p_{j',i}^{-1};i,j'),1,1)$$
and thus $(p_{j,i}^{-1};i,j) \eta_2 (p_{j',i}^{-1};i,j')$. Let $1 \leq i' \leq n$ such that $p_{j',i'}=\theta$ and $p_{j,i'}\neq \theta$. Since the relation $\N$ is congruence, by $(p_{j,i}^{-1};i,j) \eta_2 (p_{j',i}^{-1};i,j')$, 
$$(p_{j,i}^{-1};i,j)(p_{j,i'}^{-1}p_{j,i};i',j) \eta_2 (p_{j',i}^{-1};i,j')(p_{j,i'}^{-1}p_{j,i};i',j)$$ and thus $(1;i,j) \N \theta$. Hence
for every $1 \leq a \leq n$, $1 \leq b \leq m$ and $g \in G$, we have $$(g;a,b)=(gp_{j',i}^{-1};a,j')(1;i,j)(p_{j,i}^{-1};i,b) \N \theta.$$
Therefore $M/\N= \{\theta\}$. 


Now suppose that $M$ is $\textbf{CS}$-diagonal. We define the relation $\kappa$ on $M$ as follows: 
$$\theta \kappa \theta$$
and for every integers $1 \leq a,b \leq n_{\N}$,  integers
$c, c' \in \mathbb{I}_a$,
$d, d' \in \mathbb{I'}_b$
and $g \in G$, $$(g;c,d) \kappa (\beta_{c',c}g'\alpha_{d',d};c',d') \mbox{ if and only if } \phi_{\N}(g)=\phi_{\N}(g').$$

We claim that the relation $\kappa$ is congruence. 

Suppose that $(g;c,d)$ is a non zero element of $M$. Then there exist integers
$1 \leq a,b \leq n_{\N}$ such that $c \in \mathbb{I}_a$ and $d \in \mathbb{I'}_b$. Since $\beta_{c,c}=\alpha_{d,d}=1$, $(g;c,d) \kappa (\beta_{c,c}g\alpha_{d,d};c,d)$.
Then $(g;c,d) \kappa (g;c,d)$ and thus $\kappa$ is reflexive.

Now suppose that $(g;c,d) \kappa (g';c',d')$. Then there exist integers $1 \leq a,b \leq n_{\N}$ such that $c, c' \in \mathbb{I}_a$,
$d, d' \in \mathbb{I'}_b$ and
$$\phi_{\N}(g')= \beta_{c',c}\phi_{\N}(g)\alpha_{d',d}.$$
Then $$\phi_{\N}(g)= \beta_{c',c}^{-1}\phi_{\N}(g')\alpha_{d',d}^{-1}=\beta_{c,c'}\phi_{\N}(g')\alpha_{d,d'}$$
 and thus $(g';c',d') \kappa (g;c,d)$. Therefore the relation $\kappa$ is symmetric.

If $(g;c,d) \kappa (g';c',d')$ and $(g';c',d') \kappa (g'';c'',d'')$ then 
there exist integers $1 \leq a,b \leq n_{\N}$, 
such that $c, c',c'' \in \mathbb{I}_a$,
$d, d',d'' \in \mathbb{I'}_b$,
$$\phi_{\N}(g')= \beta_{c',c}\phi_{\N}(g)\alpha_{d',d}$$
and
$$\phi_{\N}(g'')= \beta_{c'',c'}\phi_{\N}(g')\alpha_{d'',d'}.$$
Then
$$\phi_{\N}(g'')= \beta_{c'',c'}\beta_{c',c}\phi_{\N}(g)\alpha_{d',d}\alpha_{d'',d'}=\beta_{c'',c}\phi_{\N}(g)\alpha_{d'',d}$$
and thus
 $(g;c,d) \kappa (g'';c'',d'')$. Therefore the relation $\kappa$ is transitive.

Then the relation $\kappa$ is an equivalence relation. Now we investigate that $\kappa$ is congruence. Suppose that $(g;c,d) \kappa (g';c',d')$ and $(g'';c'',d'') \in M$.
Then there exist integers $1 \leq a,b \leq n_{\N}$ such that $c, c' \in \mathbb{I}_a$,
$d, d' \in \mathbb{I'}_b$ and
$$\phi_{\N}(g')= \beta_{c',c}\phi_{\N}(g)\alpha_{d',d}.$$
If $d'' \notin \mathbb{I'}_a$ then 
$(g'';c'',d'')(g;c,d)=(g'';c'',d'')(g';c',d')=\theta$. Otherwise $d'' \in \mathbb{I'}_a$. 
Since
$\phi_{\N}(g')= \beta_{c',c}\phi_{\N}(g)\alpha_{d',d}$, $$\phi_{\N}(g')= \phi_{\N}(p_{d'',c'}^{-1})\phi_{\N}(p_{d'',c})\phi_{\N}(g)\alpha_{d',d}$$ and thus
$$\phi_{\N}(g''p_{d'',c'}g')= \phi_{\N}(g''p_{d'',c}g)\alpha_{d',d}.$$
Then
$$(g''p_{d'',c}g;c'',d) \kappa (g''p_{d'',c'}g';c'',d').$$
Hence
$$(g'';c'',d'')(g;c,d) \kappa (g'';c'',d'')(g';c',d').$$ 
Similarly
$$(g;c,d)(g'';c'',d'') \kappa (g';c',d')(g'';c'',d'').$$
Therefore the relation $\kappa$ is congruence.

Let $i_{\N}$ and $i'_{\N}$ are the smallest integers of $\mathbb{I}_i$ are $\mathbb{I'}_i$, respectively for every $1\leq i\leq n_{\N}$.
We claim that $M/\kappa \cong \mathcal{M}^0(G_{\N},n_{\N},n_{\N};P')$ with $P'=[p'_{i,j}]$ is a $n_{\N} \times n_{\N}$ matrix such that if $i\neq j$, then $p'_{i,j}=\theta$  otherwise $p'_{i,j}=p_{i'_{\N},i_{\N}}$ for every $1\leq i,j\leq n_{\N}$. We define the homomorphism $\phi: M/\kappa\rightarrow \mathcal{M}^0(G_{\N},n_{\N},n_{\N};P')$, as
$$\phi(\theta)=\theta,$$
$$\phi((g;c,d))=(\beta_{a_{\N},c}
\phi_{\N}(g)\alpha_{b'_{\N},d};a_{\N},
b'_{\N})$$
such that $c \in \mathbb{I}_a$, $d \in \mathbb{I'}_b$  
for every $(g;c,d) \in M/\kappa$.

First we prove that $\phi$ is well-defined.  Suppose that $(g;c,d) \kappa (g';c',d')$. Then there exist integers $1 \leq a,b \leq n_{\N}$ such that $c, c' \in \mathbb{I}_a$,
$d, d' \in \mathbb{I'}_b$ and
$$\phi_{\N}(g')= \beta_{c',c}\phi_{\N}(g)\alpha_{d',d}.$$
Then
$$\beta_{a_{\N},c'}
\phi_{\N}(g')
\alpha_{b'_{\N},d'}=
 \beta_{a_{\N},c'}\beta_{c',c}\phi_{\N}(g)\alpha_{d',d}\alpha_{b'_{\N},d'},$$ and thus
$$\beta_{a_{\N},c'}
\phi_{\N}(g')\alpha_{b'_{\N},d'}=
 \beta_{a_{\N},c}
\phi_{\N}(g)\alpha_{b'_{\N},d}.$$ Therefore  $\phi((g;c,d)) = \phi((g';c',d')).$
Then $\phi$ is well-defined.


Now suppose that $g,g' \in G$, $c \in \mathbb{I}_a$, $d \in \mathbb{I'}_e$, $c' \in \mathbb{I}_e$ and $d' \in \mathbb{I'}_b$. Hence  
$$\phi((g;c,d)(g';c',d'))=\phi((gp_{d,c'}g';c,d'))=$$
$$(\beta_{a_{\N},c}
\phi_{\N}(gp_{d,c'}g')\alpha_{b'_{\N},d'};a_{\N},
b'_{\N}).$$
Since $$\phi_{\N}(g)\alpha_{e'_{\N},d}
\phi_{\N}(p_{e'_{\N},e_{\N}})
\beta_{e_{\N},c'}\phi_{\N}(g')=$$ $$
\phi_{\N}(gp_{d,e_{\N}}p_{e'_{\N},
e_{\N}}^{-1}p_{e'_{\N},
e_{\N}}p_{d,e_{\N}}^{-1}p_{d,c'}g'),$$
the last statement is equal to
$$(\beta_{a_{\N},c}
\phi_{\N}(g)\alpha_{e'_{\N},d}
\phi_{\N}(p_{e'_{\N},e_{\N}})
\beta_{e_{\N},c'}\phi_{\N}(g')\alpha_{b'_{\N},d'};a_{\N},
b'_{\N})=$$
$$(\beta_{a_{\N},c}
\phi_{\N}(g)\alpha_{e'_{\N},d};a_{\N},
e'_{\N})(\beta_{e_{\N},c'}
\phi_{\N}(g')\alpha_{b'_{\N},d'};e_{\N},
b'_{\N})=$$
$$\phi((g;c,d))\phi((g';c',d')).$$
Therefore $\phi$ is a homomorphism.


If $\phi((g;c,d))=\phi((g';c',d'))$, then there exist $1 \leq a,b \leq n_{\N}$ such that
$c,c' \in \mathbb{I}_a$, $d,d' \in \mathbb{I'}_b$
and
$$(\beta_{a_{\N},c}
\phi_{\N}(g)\alpha_{b'_{\N},d};a_{\N},
b'_{\N})=(\beta_{a_{\N},c'}
\phi_{\N}(g')\alpha_{b'_{\N},d'};a_{\N},
b'_{\N}).$$
Hence $$\beta_{a_{\N},c}
\phi_{\N}(g)\alpha_{b'_{\N},d}=\beta_{a_{\N},c'}
\phi_{\N}(g')\alpha_{b'_{\N},d'}$$
and thus
$$
\phi_{\N}(g)=
\beta_{a_{\N},c}^{-1}\beta_{a_{\N},c'}
\phi_{\N}(g')\alpha_{b'_{\N},d'}\alpha_{b'_{\N},d}^{-1}
=\beta_{c,c'}\phi_{\N}(g')\alpha_{d,d'}.$$
Then $(g;c,d)\kappa (g';c',d')$ and thus $\phi$ is one to one.


Obviously $\phi$ is onto. Therefore $\phi$ is isomorphism.

Therefore $M/\kappa \cong \mathcal{M}^0(G_{\N},n_{\N},n_{\N};P')$ and it is easy to verify that $M/\kappa \cong \mathcal{M}^0(G_{\N},n_{\N},n_{\N};I_{n_{\N}})$.

Since the group $G/\N$ is nilpotent, the quotient group $G_{\N}$ is nilpotent and by [\cite{Jes-Okn}, Lemma 2.1], $M/\kappa$ is nilpotent and thus Lemma~\ref{smallest-congruence} follows that $\N \subseteq \kappa$.


Suppose that $c,c' \in \mathbb{I}_a$ and $d, d' \in \mathbb{I'}_b$  for integers $1 \leq a,b \leq n_{\N}$. Let integers $e,e' \in \mathbb{I'}_a$ and $f, f' \in \mathbb{I}_b$.
Thus $$(p_{d,f}^{-1};f,d) \eta_2 (p_{d',f}^{-1};f,d') \mbox{ and }(p_{e',c}^{-1};c,e') \eta_2 (p_{e',c'}^{-1};c',e').$$ 

Since the relation $\eta^{\ast}$ is congruence, $$(g;c,d)(p_{d,f}^{-1};f,d) \eta^{\ast} (g;c,d)(p_{d',f}^{-1};f,d') \mbox{ and }$$   $$(p_{e',c}^{-1};c,e')(gp_{d,f}p_{d',f}^{-1};c,d') \eta^{\ast} (p_{e',c'}^{-1};c',e')(gp_{d,f}p_{d',f}^{-1};c,d')$$ and thus
$$
(g;c,d) \eta^{\ast} (gp_{d,f}p_{d',f}^{-1};c,d') \mbox{ and }
(gp_{d,f}p_{d',f}^{-1};c,d') \eta^{\ast} (p_{e',c'}^{-1}p_{e',c}gp_{d,f}p_{d',f}^{-1};c',d')$$
for every $g \in G$. However, as $\N$ is transitive,
$$(g;c,d) \eta^{\ast} (p_{e',c'}^{-1}p_{e',c}gp_{d,f}p_{d',f}^{-1};c',d')$$
for every $g \in G$.


Now suppose that $(h;r,t) \kappa (h';r',t')$. Then there exist integers $1 \leq o,p \leq n_{\N}$ such that $r, r' \in \mathbb{I}_o$,
$t, t' \in \mathbb{I'}_p$ and
$\phi_{\N}(h')= \beta_{r',r}\phi_{\N}(h)\alpha_{t',t}.$
Then there exist integers $w \in \mathbb{I}_p$, $v \in \mathbb{I'}_o$ such that $h' \N p_{v,r'}^{-1}p_{v,r}hp_{t,w}p_{t',w}^{-1}$ and hence
$$(h';r',t') \N(p_{v,t'}^{-1}p_{v,t}hp_{t,w}p_{t',w}^{-1};r',t').$$

Now, by above $ (h;r,t)\N(p_{v,t'}^{-1}p_{v,t}hp_{t,w}p_{t',w}^{-1};r',t')$  and thus $$(h;r,t) \N (h';r',t').$$
Hence $\kappa \subseteq \N$. 

Therefore $M/\N \cong \mathcal{M}^0(G_{\N},n_{\N},n_{\N};I_{n_{\N}})$ and the result follows.
\end{proof}


The following result can be seen as an immediate corollary of Proposition~\ref{regular Rees matrix semigroup}.

\begin{cor} \label{completely simple semigroup}
Let $M= \mathcal{M}(G,n,m;P)$ be a finite completely simple semigroup with the sandwich matrix $P$. Then $M/\N \cong G_{\N}$.
\end{cor}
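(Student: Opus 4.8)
The plan is to deduce this from Proposition~\ref{regular Rees matrix semigroup} by passing through the completely $0$-simple semigroup $M^0=\mathcal{M}^0(G,n,m;P)$ obtained from $M$ by adjoining a zero $\theta$. Since $M$ is completely simple, every entry of $P$ is non-zero, so $M^0$ is vacuously $\textbf{CS}$-diagonal; moreover, any two columns (resp.\ rows) of $P$ already share a non-zero entry, so the partitions $\mathbb{I}_1,\dots,\mathbb{I}_t$ and $\mathbb{I'}_1,\dots,\mathbb{I'}_t$ are trivial, i.e.\ $n_{\N}=1$. Proposition~\ref{regular Rees matrix semigroup} then gives $M^0/\N\cong\mathcal{M}^0(G_{\N},1,1;I_1)$, and a one-by-one sandwich matrix with entry $1$ makes this isomorphic to $G_{\N}$ with a zero $\theta$ adjoined. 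Note that $G_{\N}$, as defined just before the proposition, depends only on $G/\N$, the (trivial) partition, and the entries of $P$, so it is the same object whether read off $M$ or off $M^0$.

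Next I would transfer the congruence from $M^0$ back to $M$. The crucial point is that $\N$ on $M$ is \emph{exactly} the restriction to $M$ of $\N$ on $M^0$: if $x\,\eta_n\,y$ in $M^0$ with $x\neq y$, then none of the witnessing $z_1,\dots,z_n$ can equal $\theta$ (a single $\theta$ among them forces $\lambda_n=\rho_n=\theta$, hence $x=y=\theta$), so the same witness shows $x\,\eta_n\,y$ in $M$; thus the relation $\eta$ on $M^0$ restricts to $\eta$ on $M$. One inclusion between the generated congruences, $\N_M\subseteq\N_{M^0}|_M$, is immediate since the restriction of a congruence is a congruence; for the reverse, the relation on $M^0$ whose classes are the $\N$-classes of $M$ together with $\{\theta\}$ is easily seen to be a congruence on $M^0$ containing $\eta^{M^0}$ (again because no non-trivial $\eta_n$-pair involves $\theta$), hence contains $\N_{M^0}$, which yields $\N_{M^0}|_M\subseteq\N_M$. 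Consequently $M/\N$ embeds into $M^0/\N\cong G_{\N}\cup\{\theta\}$. Finally, in the proof of Proposition~\ref{regular Rees matrix semigroup} the $\kappa$-class (hence $\N$-class) of $\theta$ in the $\textbf{CS}$-diagonal case is $\{\theta\}$, so no element of $M$ is $\N$-equivalent to $\theta$; therefore the image of $M$ in $G_{\N}\cup\{\theta\}$ is the subsemigroup $G_{\N}$, and $M/\N\cong G_{\N}$.

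The only genuinely delicate step is the second paragraph: verifying that $\N$ on $M$ really is the restriction of $\N$ on $M^0$, not merely contained in it. Once the observation that a non-trivial $\eta_n$-pair cannot involve $\theta$ is in place, both inclusions are short, so I expect no serious obstacle. An alternative, self-contained route avoids $M^0$ altogether: one repeats verbatim the construction of the congruence $\kappa$ from the proof of Proposition~\ref{regular Rees matrix semigroup}, now on $M$ with $n_{\N}=1$ (so the $\theta$-clause simply disappears), checks that $M/\kappa$ is a one-by-one Rees matrix semigroup over $G_{\N}$ and hence isomorphic to $G_{\N}$, and then reruns the two inclusions $\N\subseteq\kappa$ (by Lemma~\ref{smallest-congruence}, since $G_{\N}$ is nilpotent as a quotient of the nilpotent group $G/\N$) and $\kappa\subseteq\N$ (from the same $\eta_2$-relations, all of which live in $M$). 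Going through $M^0$ is shorter, so I would present that version.
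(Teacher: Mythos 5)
Your proposal is correct and follows exactly the route the paper intends: the paper offers no separate argument, stating the corollary as an immediate consequence of Proposition~\ref{regular Rees matrix semigroup}, and your proof is precisely that deduction (adjoin a zero, observe that the $\textbf{CS}$-diagonal condition holds vacuously with $n_{\N}=1$, and transfer the congruence back using the fact that no non-trivial $\eta_n$-pair involves $\theta$). The only difference is that you supply the bookkeeping the paper leaves implicit.
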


Now we recall some definitions (see for example \cite{Cli} and \cite{How}).

Suppose $S$ is a semigroup  such that $S= \bigcup \{S_{\alpha
}\mid \alpha \in \Omega \}$, a disjoint union of subsemigroups
$S_{\alpha}$, and  such that for every pair of elements $\alpha,
\beta \in \Omega$ we have $S_{\alpha}S_{\beta} \subseteq
S_{\gamma}$ for some $\gamma \in \Omega$. One then has a product
in $\Omega$ defined by $\alpha \beta = \gamma$ if
$S_{\alpha}S_{\beta} \subseteq S_{\gamma}$ and one says that $S$
is the union of the band $\Omega$ of semigroups $S_{\alpha}$, with
$\alpha \in \Omega$. If $\Omega$ is commutative, then one obtains
a partial order relation $\leq $ on  $\Omega$ with $\beta
\leq\alpha$ if $\alpha \beta =\beta$. In this case  $\Omega$ is a
semilattice and one says that $S$ is the semilattice $\Omega$
of semigroups $S_{\alpha}$.

A semigroup $S$ is a completely regular semigroup in which every element is in some subgroup of the semigroup. Therefore every $H$-class in $S$ is a group.
One important class of completely regular semigroups, namely the class of completely simple semigroups.
Completely 0-simple semigroups are not in general completely regular, since not every $H$-class in such a semigroup is a group.
If $S$ is completely simple then $S$ is completely regular and simple. Now suppose that $S$ is a completely regular semigroup.
The green relation $J$ is a congruence and we denote the semilattice $S /J$ by $Y$. For each $a \in Y$, $S_a$ is a $J$-class of $S$ and is a completely simple subsemigroup.
Thus $S$ is the disjoint union of the completely simple semigroups and the congruence property of $J$ gives us that $S_aS_b \subseteq S_{ab}$.
We say that $S$ is a semilattice of completely simple semigroups. 
A Clifford semigroup is a semigroup that is both an inverse semigroup and a completely regular semigroup.

The following lemma is the preliminary result toward the identification of the $\N$-quotient of completely regular semigroups.

\begin{lem} \label{completely regular}
Let $S$ be a semilattice of completely simple semigroups $M_i=\mathcal{M}(G_i,n_i,m_i;P_i)$, for $1\leq i \leq n$. Then there exist the groups $G'_i$ such that $G'_i$ is a quotient group of the group $G_{i_{\N}}$, for $1\leq i \leq n$ and $S/\N$ is the semilattice of them.
\end{lem}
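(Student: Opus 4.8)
The plan is the following. Write $S=\bigcup_{i\in Y}M_i$ for the given semilattice $Y=\{1,\dots,n\}$, so that $M_iM_j\subseteq M_{ij}$ and, as recalled before the statement, each $M_i$ is a single $\mathcal{J}$-class of $S$ and $\mathcal{J}$ is a congruence on $S$ with $S/\mathcal{J}=Y$; let $q\colon S\to S/\N$ be the quotient homomorphism. The key observation I would establish first is that $\N\subseteq\mathcal{J}$. Indeed, if $x\,\eta_m\,y$ with $m\geq 2$, then $x=\lambda_m(x,y,z_1,\dots,z_m)$ and $y=\rho_m(x,y,z_1,\dots,z_m)$ for some $z_1,\dots,z_m\in S^1$; since both $\lambda_m$ and $\rho_m$ contain $x$ and $y$ as factors, this forces $x\leq_{\mathcal{J}}y$ and $y\leq_{\mathcal{J}}x$, that is $x\,\mathcal{J}\,y$. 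As $\eta_1$ is the diagonal, $\eta_m\subseteq\mathcal{J}$ for every $m$, hence the transitive closure $\eta$ lies in $\mathcal{J}$; and since $\mathcal{J}$ is itself a congruence containing $\eta$, the smallest such congruence $\N$ satisfies $\N\subseteq\mathcal{J}$.

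From $\N\subseteq\mathcal{J}$ it follows that the sets $G'_i:=q(M_i)$ are pairwise disjoint: if $q(x)=q(y)$ with $x\in M_i$ and $y\in M_j$, then $x\,\N\,y$, hence $x\,\mathcal{J}\,y$, hence $i=j$. Consequently $S/\N=q(S)=\bigcup_{i\in Y}G'_i$ is a disjoint union of subsemigroups, and $G'_iG'_j=q(M_iM_j)\subseteq q(M_{ij})=G'_{ij}$, so $S/\N$ is the band $Y$ — hence the semilattice $Y$ — of the $G'_i$. It then remains to identify each $G'_i$. The restriction $q|_{M_i}\colon M_i\to G'_i$ is a surjective homomorphism, so $G'_i$ is a quotient of $M_i$; moreover $G'_i$ is a subsemigroup of $S/\N$, which is nilpotent by Lemma~\ref{smallest-congruence}, so $G'_i$ is nilpotent. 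Thus the kernel congruence of $q|_{M_i}$ is a congruence on $M_i$ with nilpotent quotient, and Lemma~\ref{smallest-congruence} applied to $M_i$ shows it contains the smallest congruence on $M_i$ with nilpotent quotient; by Corollary~\ref{completely simple semigroup} the latter quotient is $G_{i_\N}$. Hence $G'_i$ is a quotient of the group $G_{i_\N}$; in particular each $G'_i$ is a group. This exhibits $S/\N$ as the semilattice $Y$ of the groups $G'_i$ with each $G'_i$ a homomorphic image of $G_{i_\N}$, which is the assertion.

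The only delicate point is the inclusion $\N\subseteq\mathcal{J}$, and this is exactly where complete regularity is used in an essential way: the inclusion fails for arbitrary finite semigroups — for a non-$\textbf{CS}$-diagonal regular Rees matrix semigroup, $\N$ collapses a whole non-null $\mathcal{J}$-class onto $\theta$ by Proposition~\ref{regular Rees matrix semigroup}. What makes it work here is that in a semilattice of completely simple semigroups the product $a\,M_i\,b$ always lies in a single $\mathcal{J}$-class, so $\mathcal{J}$ is a congruence and the congruence generated by the $\mathcal{J}$-contained relation $\eta$ cannot escape $\mathcal{J}$. Everything after that step is bookkeeping with Lemma~\ref{smallest-congruence} and Corollary~\ref{completely simple semigroup}.
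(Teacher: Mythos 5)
Your proof is correct and follows essentially the same route as the paper's: the key point in both is that for $m\geq 2$ the words $\lambda_m$ and $\rho_m$ contain both $x$ and $y$ as factors, which forces $\eta$-related elements into the same semilattice component, after which Corollary~\ref{completely simple semigroup} identifies each component of the quotient. Your packaging of this as ``$\mathcal{J}$ is a congruence containing $\eta$, hence contains $\N$'' is a slightly cleaner formulation than the paper's explicit decomposition of an $\N$-related pair into products of $\eta$-chains, and your final step exhibiting $G'_i$ as a quotient of $G_{i_{\N}}$ via Lemma~\ref{smallest-congruence} spells out carefully what the paper leaves terse.
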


\begin{proof}
Suppose that $a \N b$. Then there exist elements $a_1,\ldots, a_k, b_1, \ldots, b_k \in S$ such that  $a =a_1\ldots a_k$, $b=b_1\ldots b_k$ and $a_q \eta b_q$, for $1 \leq q \leq k$. As $a_q \eta b_q$, there exist integers $n_q$, $q_1, \ldots, q_{n_q}$ and elements $c_{q,0}, c_{q,1}, \ldots, c_{q,n_q}$ in $S$ such that 
$$a_q=c_{q,0} \eta_{q_{1}} c_{q,1},~ c_{q,1} \eta_{q_{2}} c_{q,2},~ \ldots,~ c_{q,n_{q-1}} \eta_{q_{n_q}} c_{q,n_q}=b_q$$  for $1 \leq q \leq k$.
Since $c_{q,p-1} \eta_{q_{p}} c_{q,p}$, there exist elements $d_{q,p,1}, \ldots, d_{q,p,q_p}$ in $S^1$  such that $$c_{q,p-1}=\lambda_{q_p}(c_{q,p-1},c_{q,p},d_{q,p,1},\ldots, d_{q,p,q_p})\mbox{~and~} $$ $$c_{q,p}=\rho_{q_p}(c_{q,p-1},c_{q,p},d_{q,p,1},\ldots, d_{q,p,q_p}),$$ for every $1 \leq q \leq k$ and $1 \leq p \leq n_q$. 

Suppose that $x \in M_{i}$, $y \in M_{j}$, $1\leq i,j\leq n$ and $x\eta_m y$, for some $x,y \in S$. Then there exists a positive integer $m$ and elements $w_{1}, w_{2}, \ldots, w_{m}\in S^{1}$ such that $x = \lambda_{m}(x, y, w_{1}, w_{2}, \ldots, w_{m})$, $y = \rho_{m}(x,y, w_{1}, w_{2}, \ldots, w_{m})$. 
Since $S$ is semilattice of $M_i$ for $1\leq i\leq n$, there exists $1\leq i_1,\ldots,i_{m'}\leq n$ such that $m'\leq m$ and $i=iji_1\ldots i_{m'}$ and $j=iji_1\ldots i_{m'}$. Then $i=j$. 

Now by above, there exists an integer $1 \leq i_q \leq n$ such that $a_q, b_q \in M_{i_q}$ for every  $1 \leq q \leq k$. Again as $S$ is semilattice of $M_i$ for $1\leq i\leq n$, there exists an integer $1\leq g \leq n$ such that $a,b\in M_g$. 

As every $M_i$ is a completely simple semigroup, for $1\leq i\leq n$, by Corollary~\ref{completely simple semigroup}, $M_i/\N=G_{i_{\N}}$.

Finally as above results, there exist the groups $G'_i$ such that $G'_i$ is a quotient group of the group $G_{i_{\N}}$ for $1\leq i \leq n$ and $S/\N= \bigcup_{1\leq i \leq n}G'_i$.
\end{proof}

In~\cite{Jes-Sha}, Jespers and the author investigated the upper non-nilpotent graph ${\mathcal N}_{S}$ of a finite semigroup $S$. Recall that the vertices of ${\mathcal N}_{S}$ are the elements of $S$ and there is an edge between $x$ and $y$ if the subsemigroup generated by $x$ and $y$, denoted by $\langle x, y \rangle$, is not nilpotent. Now, let $S_i$, with $1 \leq i \leq n$, denote the connected ${\mathcal N}_S$-components. If $\abs{S_{i}} > 1$, for each $1 \leq i \leq n$, and if each connected ${\mathcal N}_{S}$-component is complete then each connected ${\mathcal N}_{S}$-component is a completely simple semigroup with the trivial maximal group and the semigroup $S$ is a semilattice $\Omega=\{1,\ldots,n\}$ of its ${\mathcal N}_{S}$-components (\cite[Theorem 3.7]{Jes-Sha}). In the view of Lemma~\ref{completely regular}, it follows that $S/\N=\Omega$.

The following theorem can be seen as an immediate result of the Lemma~\ref{completely regular}.

\begin{thm} \label{completely regular2}
Let $S$ be a completely regular finite semigroup. Then $S/\N$ is a Clifford semigroup.
\end{thm}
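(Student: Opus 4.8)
The plan is to deduce Theorem~\ref{completely regular2} from Lemma~\ref{completely regular} together with the structural description of completely regular semigroups recalled just before the lemma. First I would invoke the standard fact that a completely regular finite semigroup $S$ is a semilattice $Y = S/\mathbf{J}$ of its completely simple $\mathbf{J}$-classes $S_a = M_a = \mathcal{M}(G_a,n_a,m_a;P_a)$, $a\in Y$, with $S_aS_b\subseteq S_{ab}$. This is exactly the hypothesis of Lemma~\ref{completely regular}, so the lemma applies and gives groups $G'_a$, each a quotient of $G_{a_{\N}}$, such that $S/\N$ is the semilattice $Y$ of the groups $G'_a$.

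Next I would argue that a semilattice of groups is precisely a Clifford semigroup. Concretely, $S/\N = \bigcup_{a\in Y} G'_a$ is completely regular, since each $H$-class of $S/\N$ is one of the groups $G'_a$ and hence a group; and it is inverse, since its idempotents are exactly the identity elements $1_a$ of the $G'_a$, and in a semilattice of groups these commute: if $e\in G'_a$, $f\in G'_b$ are idempotents then $ef$ and $fe$ both lie in $G'_{ab}$ and both equal its identity $1_{ab}$, so $ef=fe$; a completely regular semigroup whose idempotents commute is inverse. Being both completely regular and inverse, $S/\N$ is a Clifford semigroup by definition. Alternatively, one can quote the classical characterization that Clifford semigroups are exactly the (strong) semilattices of groups, which makes the conclusion immediate once $S/\N$ is exhibited as a semilattice of the $G'_a$.

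The one point that needs a small amount of care, rather than being purely formal, is checking that the multiplication on $S/\N$ inherited from $S$ really does make it a semilattice (and not merely a band) of the $G'_a$ in the sense needed: the indexing semilattice of $S/\N$ is a homomorphic image of $Y$, and since $Y$ is already a semilattice and the structure maps $S_aS_b\subseteq S_{ab}$ descend to $G'_aG'_b\subseteq G'_{ab}$ under the congruence $\N$, the quotient's $\mathbf{J}$-order is again a semilattice. This is essentially contained in the statement of Lemma~\ref{completely regular}, so in the write-up it amounts to one or two sentences. I do not expect any genuine obstacle here: once Lemma~\ref{completely regular} is in hand, the theorem is a translation of "semilattice of groups" into "Clifford semigroup," and the proof should be just a couple of lines citing the lemma and the definition of a Clifford semigroup recalled above.
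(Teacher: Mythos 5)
Your proposal is correct and follows essentially the same route as the paper, which states the theorem as an immediate consequence of Lemma~\ref{completely regular}: decompose the completely regular semigroup as a semilattice of its completely simple $\mathbf{J}$-classes, apply the lemma to exhibit $S/\N$ as a semilattice of groups, and invoke the classical fact that a semilattice of groups is precisely a Clifford semigroup. The only cosmetic caveat is that your direct verification that $ef=fe=1_{ab}$ for idempotents is slightly terse, but the alternative you offer (quoting the standard characterization of Clifford semigroups as semilattices of groups) closes that gap.
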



\section{An approach to $\N$ through principal series}\label{approach}

Every finite semigroup admits at least one principal series. In this section we suppose that $S$ is a finite semigroup with the principal series:
$$S= S_1 \supset S_2 \supset \ldots \supset S_{o} \supset S_{o+1} = \emptyset.$$
That is, each
$S_p$ is an ideal of $S$ and there is no ideal of $S$ strictly
between $S_p$ and $S_{p+1}$ (for convenience we call the empty set
an ideal of $S$). Each principal factor $S_p / S_{p+1} (1 \leq p \leq o)$ of $S$ is either completely $0$-simple, completely simple
or null. Every completely 0-simple factor semigroup is isomorphic
with a regular Rees matrix semigroup over a finite group $G$. 


\begin{lem} \label{diagonal T}
Suppose that $S_p / S_{p+1}$ is a non-null principal factor of $S$ and has an element $a$ such that $a \not\N\theta$.
Then there exists an integers $p \leq p' \leq o$ such that the principal factor $S_{p'} / S_{p'+1}$ satisfies the following properties: 
\begin{enumerate}
\item For every $x \in S_p \setminus S_{p+1}$ there exists an element $y \in S_{p'} \setminus S_{p'+1}$ such that $x \N y$. 
\item For every $z \in S_{p} \setminus S_{p+1} \cup S_{p'} \setminus S_{p'+1}$ and $z' \in S_{p'+1}$, $z \not\N z'$. 
\item
The principal factor $S_{p'} / S_{p'+1}$ is a $\textbf{CS}$-diagonal regular Rees matrix semigroup. 
\item If $(g;\alpha,\beta), (g';\gamma,\lambda) \in S_{p} \setminus S_{p+1}=\mathcal{M}^{0}(G,n,m;P)$ such that $p_{\beta,\gamma} \neq \theta$ and $(h;i,j), (h';k,l) \in S_{p'} \setminus S_{p'+1} = \mathcal{M}^{0}(H,n',m';Q)$ such that $(g;\alpha,\beta)  \N (h;i,j)$ and $(g';\gamma,\lambda) \N (h';k,l)$ then $k \in \mathbb{I}_{r}$ and $j \in \mathbb{I'}_{r}$ for some $ 1 \leq r\leq {n'}_{\N}$.
\end{enumerate}
\end{lem}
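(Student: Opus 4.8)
The plan is to analyze how the $\N$-relation can pull an element of $S_p\setminus S_{p+1}$ into lower principal factors, and to track this process until it stabilizes. First I would examine the hypothesis $a\not\N\theta$: since $\N$ is a congruence and $S_p/S_{p+1}$ is non-null, Lemma~\ref{finite-nilpotent} together with Proposition~\ref{regular Rees matrix semigroup} tells us that if $S_p/S_{p+1}$ were \emph{not} $\textbf{CS}$-diagonal, then every element of $S_p\setminus S_{p+1}$ would be $\N$-equivalent to $\theta$ (the argument in Proposition~\ref{regular Rees matrix semigroup} showing $M/\N=\{\theta\}$ is internal to $M$, but the same $\eta_2$-witnesses are available inside $S$). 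Hence the hypothesis forces $S_p/S_{p+1}$ itself to behave $\textbf{CS}$-diagonally modulo $S_{p+1}$; more to the point, I would use it to show that any element $x\in S_p\setminus S_{p+1}$ satisfies $x\not\N z'$ for $z'\in S_{p+1}$, because an $\N$-relation between $x$ and something in $S_{p+1}$ would, after multiplying up, propagate to $a$.

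Next I would set up the descent. Define $p'$ to be the largest index $\geq p$ such that some element of $S_p\setminus S_{p+1}$ is $\N$-equivalent to an element of $S_{p'}\setminus S_{p'+1}$; such a $p'$ exists and is $\geq p$ since $p$ itself qualifies, and the principal series is finite. For property~(2), suppose some $z$ in $S_p\setminus S_{p+1}$ or in $S_{p'}\setminus S_{p'+1}$ were $\N$-related to $z'\in S_{p'+1}$. Using that $\N$ is a congruence and that $z$ generates (modulo the ideal below it) a completely $0$-simple image, I would multiply $z$ and $z'$ on both sides by suitable elements of the relevant $\mathcal{R}$- and $\mathcal{L}$-classes to reach an $\N$-relation between an element of $S_{p'}\setminus S_{p'+1}$ and an element of a strictly lower factor, or between an element of $S_p\setminus S_{p+1}$ and an element below $S_{p'+1}$ — contradicting maximality of $p'$. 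Property~(1): pick any $x_0\in S_p\setminus S_{p+1}$ witnessing $x_0\N y_0$ with $y_0\in S_{p'}\setminus S_{p'+1}$; for arbitrary $x\in S_p\setminus S_{p+1}$, write $x = u x_0 v$ inside the completely $0$-simple factor (every non-null $\mathcal{D}$-class is regular, so $\mathcal{H}$-class bookkeeping lets us express any element this way up to elements of $S_{p+1}$), apply the congruence to get $x\N u y_0 v$, and check $u y_0 v\in S_{p'}\setminus S_{p'+1}$ using property~(2) (it cannot drop into $S_{p'+1}$). Property~(3) then follows: if $S_{p'}/S_{p'+1}$ were not $\textbf{CS}$-diagonal, the Proposition~\ref{regular Rees matrix semigroup} argument would give $y_0\N\theta$, hence $x_0\N\theta$, hence $a\N\theta$, a contradiction; and it is non-null because it contains the non-null-equivalent element $y_0$.

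For property~(4), the key computational input is the explicit description of the $\N$-classes of a $\textbf{CS}$-diagonal Rees matrix semigroup from Proposition~\ref{regular Rees matrix semigroup}: two non-zero elements of $\mathcal{M}^0(H,n',m';Q)$ are $\N$-related precisely when their row-indices lie in a common $\mathbb{I}_r$, their column-indices lie in a common $\mathbb{I'}_s$, and the group coordinates match after applying $\phi_{\N}$ and the correction factors $\beta_{\cdot,\cdot},\alpha_{\cdot,\cdot}$. Given $(g;\alpha,\beta)\N(h;i,j)$ and $(g';\gamma,\lambda)\N(h';k,l)$ with $p_{\beta,\gamma}\neq\theta$, the product $(g;\alpha,\beta)(g';\gamma,\lambda)=(gp_{\beta,\gamma}g';\alpha,\lambda)$ is again in $S_p\setminus S_{p+1}$, so by property~(1) it is $\N$-related to some non-zero element of $S_{p'}\setminus S_{p'+1}$; on the other hand, by the congruence property it is $\N$-related to $(h;i,j)(h';k,l)$. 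If $(h;i,j)(h';k,l)=\theta$, i.e.\ $q_{j,k}=\theta$, then we would get a non-zero element of $S_p\setminus S_{p+1}$ that is $\N$-related to $\theta$-in-$S_{p'+1}$, contradicting property~(2) (or, going one step further, contradicting $a\not\N\theta$). Hence $q_{j,k}\neq\theta$, which by the definition of the index-partition for the $\textbf{CS}$-diagonal semigroup $\mathcal{M}^0(H,n',m';Q)$ means exactly that $k\in\mathbb{I}_r$ and $j\in\mathbb{I'}_r$ for a common $r$.

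The main obstacle I anticipate is property~(1): making rigorous the claim that every $x\in S_p\setminus S_{p+1}$ can be written $u x_0 v$ with the product staying out of $S_{p+1}$, and transferring this through the $\N$-congruence without accidentally landing in $S_{p'+1}$. This requires careful use of the Rees coordinates — in a completely $0$-simple principal factor one moves between $\mathcal{R}$-classes by left multiplication and between $\mathcal{L}$-classes by right multiplication, but the relevant sandwich entries must be non-zero, so one should choose $u$ and $v$ from the $\mathcal{H}$-classes of idempotents connected to $x_0$ — and then property~(2), established first, is exactly what prevents the image from collapsing. Everything else is bookkeeping with the congruence and the structure theorem from Proposition~\ref{regular Rees matrix semigroup}.
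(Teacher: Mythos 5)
Your overall strategy --- take $p'$ maximal so that some element of $S_p\setminus S_{p+1}$ is $\N$-related into $S_{p'}\setminus S_{p'+1}$, transport arbitrary elements via the congruence together with Rees-coordinate multiplication, and deduce (3) and (4) from (2) --- is essentially the paper's proof. But two steps as written do not hold up. First, in your opening paragraph you claim that $a\not\N\theta$ forces $x\not\N z'$ for every $x\in S_p\setminus S_{p+1}$ and every $z'\in S_{p+1}$. This is false: the hypothesis only excludes $\N$-equivalence to the zero of $S$, not to arbitrary elements of the ideal $S_{p+1}$, and indeed the whole point of the lemma (see the paper's example, where $[i,k]\in S_2\setminus S_3$ is $\N$-equivalent to $(a,b)\in S_3$) is that elements of $S_p\setminus S_{p+1}$ typically \emph{are} $\N$-related to elements of strictly deeper factors. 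Likewise your claim that non-$\textbf{CS}$-diagonality of $S_p/S_{p+1}$ would give $x\N\theta$ conflates the zero of the principal factor with the zero of $S$; the internal argument of Proposition~\ref{regular Rees matrix semigroup} only yields $\N$-equivalence to \emph{some} element of $S_{p+1}$. Fortunately neither claim is used later, since your working versions of (1) and (2) correctly refer to $S_{p'+1}$ and to the maximality of $p'$.

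Second, and this is the real gap: property (3) requires $S_{p'}/S_{p'+1}$ to be non-null, and your justification (``it is non-null because it contains the non-null-equivalent element $y_0$'') is not an argument --- a null principal factor can perfectly well contain elements that are not $\N$-equivalent to $\theta$, so nothing you have established rules this case out, and without it there is no sandwich matrix $Q$ to speak of. The paper closes this by choosing an element of $S_p\setminus S_{p+1}$ whose square stays in $S_p\setminus S_{p+1}$, e.g.\ $(1;\alpha,\alpha')$ with $p_{\alpha',\alpha}\neq\theta$ (available by regularity); if $S_{p'}/S_{p'+1}$ were null, the corresponding $v\in S_{p'}\setminus S_{p'+1}$ would satisfy $v^2\in S_{p'+1}$, so $(1;\alpha,\alpha')^2\N v^2$ puts an element of $S_p\setminus S_{p+1}$ in the $\N$-class of an element of $S_{p'+1}$, contradicting the maximality of $p'$. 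You need this (or an equivalent) argument. A smaller point of the same kind: in the case of (2) where $z\in S_{p'}\setminus S_{p'+1}$, you should transport $z$ to the particular element $y_0$ (using that $S_{p'}\setminus S_{p'+1}$ is a $\mathcal{J}$-class, so $y_0=CzD$ for some $C,D\in S^1$, whence $x_0\N y_0=CzD\N Cz'D\in S_{p'+1}$), since only an $\N$-relation involving an element of $S_p\setminus S_{p+1}$ contradicts your maximality condition; your first alternative (``an element of $S_{p'}\setminus S_{p'+1}$ related to a strictly lower factor'') does not by itself contradict anything you have assumed.
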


\begin{proof}
Since $S_p / S_{p+1}$ is not null, $S_p / S_{p+1}$ is completely simple or completely $0$-simple.
If $S_p / S_{p+1}$ is completely simple, then trivially the results is easy to verify.
Then we suppose that $S_p / S_{p+1}= \mathcal{M}^0(G,n,m;P)$
is a completely $0$-simple and $x \in S_p \setminus S_{p+1}$. We denote $\mathcal{M}^0(G,n,m;P)$ by $M$. 

Let $p'$ be the biggest integer that there exists an element $b \in S_{p'} \setminus S_{p'+1}$ such that $a \N b$. 

Since $a,x \in M$, there exist integers $1 \leq \alpha,\gamma \leq n, 1 \leq \beta,\lambda \leq m$ and elements $g,h \in G$ such that $a=(g;\alpha, \beta)$ and $x=(h;\gamma,\lambda)$.
Also since $M$ is regular, for every $1 \leq l \leq m$ and $1 \leq r \leq n$ there exist integers $1 \leq l' \leq n$ and $1 \leq r' \leq m$ such that $p_{l,l'},p_{r',r} \neq \theta$.

Let $A=(hg^{-1}p_{\alpha',\alpha}^{-1};\gamma, \alpha')$, $A'=(gh^{-1}p_{\gamma',\gamma}^{-1};\alpha, \gamma')$, $B=(p_{\beta,\beta'}^{-1};\beta', \lambda)$ and $B'=(p_{\lambda,\lambda'}^{-1};\lambda', \beta)$. 
Since $S_{p'}$ is an ideal, $AbB \in S_{p'}$. If $AbB \in S_{p'+1}$, then $A'AbBB' \in S_{p'+1}$. Since $a = A'AaBB'$, $a \N A'AbBB'$,
a contradiction with the assumption. Therefore $AbB \in S_{p'} \setminus S_{p'+1}$. Since $(h;\gamma,\lambda)=AaB$, $x \N AbB$ 
and as $x$ is taken as an arbitrary element of $S_p \setminus S_{p+1}$, for every element of $c \in S_p \setminus S_{p+1}$ there exists an element of $d \in S_{p'} \setminus S_{p'+1}$ such that $c \N d$. In a similar way as above ${p'}$ is the biggest integer that there exists an element $e \in S_{p'} \setminus S_{p'+1}$ such that $c \N e$.


Suppose that $S_{p'} / S_{p'+1}$ is null. Let $(1;\alpha, \alpha') \in M$. By above, there exists an element $v \in S_{p'} \setminus S_{p'+1}$ such that $(1;\alpha, \alpha') \N v$. Since $S_{p'} / S_{p'+1}$ is null, $vv \in S_{p'+1}$ and thus $(1;\alpha, \alpha')(1;\alpha, \alpha')$ in the $\N$-class of an element of $S_{p'+1}$, a contradiction. Therefore $S_{p'}/ S_{p'+1}$ is not null.


If there exist elements $z \in S_{p'} \setminus S_{p'+1}$ and $z' \in S_{p'+1}$ such that $z \N z'$, then, in a similar way as above, $a$ in the $\N$-class of an element of $S_{p'+1}$, a contradiction with the assumption. Then for every $z \in S_{p'} \setminus S_{p'+1}$ and $z' \in S_{p'+1}$, $z \not\N z'$.


Now suppose that $S_{p'} / S_{p'+1}$ is completely $0$-simple.
Then there exists a completely $0$-simple semigroup $\mathcal{M}^0(H,n',m';Q)$ such that  $S_{p'} / S_{p'+1}$ is isomorphic with it. We denote $\mathcal{M}^0(H,n',m';Q)$ by $M'$.

If $q_{j,i}$ and $q_{j',i}$ are non zero and $j \neq j'$ for $1 \leq i \leq n'$ and $1 \leq j,j' \leq m'$, then
$$(q_{j,i}^{-1};i,j) = \lambda_2((q_{j,i}^{-1};i,j),(q_{j',i}^{-1};i,j'),1,1),$$
$$(q_{j',i}^{-1};i,j') = \rho_2((q_{j,i}^{-1};i,j),(q_{j',i}^{-1};i,j'),1,1)$$
and thus $(q_{j,i}^{-1};i,j) \eta_2 (q_{j',i}^{-1};i,j')$. Suppose that there exists $1 \leq i' \leq n'$ such that $q_{j',i'}=\theta$ and $q_{j,i'}\neq \theta$. Since the relation $\N$ is congruence, 
$$(q_{j,i}^{-1};i,j)(q_{j,i'}^{-1}q_{j,i};i',j) \eta_2 (q_{j',i}^{-1};i,j')(q_{j,i'}^{-1}q_{j,i};i',j)$$ and thus $(1;i,j) \N \theta$. Therefore $(1;i,j)$ is in the $\N$-class an element of $S_{p'+1}$, a contradiction. Therefore $M'$ is $\textbf{CS}$-diagonal.


Suppose that
$(g;\alpha,\beta), (g';\gamma,\lambda) \in M$ such that $p_{\beta,\gamma} \neq \theta$ and elements $(h;i,j), (h';k,l) \in M'$ such that $(g;\alpha,\beta)  \N (h;i,j)$ and $(g';\gamma,\lambda) \N (h';k,l)$. Since $(g;\alpha,\beta) (g';\gamma,\lambda) \neq \theta$, we get that $(h;i,j) (h';k,l) \neq \theta$ and thus $k \in \mathbb{I}_{r}$ and $j \in \mathbb{I'}_{r}$ for some $ 1 \leq r \leq {n'}_{\N}$. 

If $S_{p'} / S_{p'+1}$ is completely simple, then $S_{p'} / S_{p'+1}$, trivially is $\textbf{CS}$-diagonal and the last result is easy to verify.
\end{proof}


The following result can be seen as an immediate corollary of Lemma~\ref{diagonal T}.

\begin{cor} \label{diagonal T'}
Suppose that $S_p / S_{p+1}$ is a non-null principal factor of $S$. The following properties hold.
\begin{enumerate}
\item If the principal factor $S_p / S_{p+1}$ is not $\textbf{CS}$-diagonal, then the subset $S_p \setminus S_{p+1}$ is in the class of $\theta$ of $\N$ or in the $\N$-class of some principal factor $S_q / S_{q+1}$ such that $q>p$.
\item If there exists an element $a \in S_p \setminus S_{p+1}$ such that $a \N\theta$, then the subset $S_p \setminus S_{p+1}$ is in the class of $\theta$ of $\N$.
\item If every principal factor $S_q / S_{q+1}$ of $S$ such that $q \geq p$ is null or is not $\textbf{CS}$-diagonal, then the subset $S_p \setminus S_{p+1}$ is in the class of $\theta$ of $\N$.
\end{enumerate}
\end{cor}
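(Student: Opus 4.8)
The three statements of Corollary~\ref{diagonal T'} are all obtained by specializing Lemma~\ref{diagonal T}, so the plan is essentially a case analysis that extracts each conclusion from the lemma or from its proof.

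First, for part (2): suppose some $a \in S_p \setminus S_{p+1}$ satisfies $a \N \theta$. Since $\N$ is a congruence, for every $x \in S_p \setminus S_{p+1}$ I will write $x = AaB$ using the regularity of the principal factor exactly as in the proof of Lemma~\ref{diagonal T} (choosing $A$, $B$ so that $AaB$ lands in the same $\textbf{R}$- and $\textbf{L}$-classes as $x$, with the group coordinate adjusted), whence $x \N A\theta B$. But $A$ and $B$ are elements of $S$ with $A\theta B \in S_{p+1}$, so $x$ lies in the $\N$-class of an element of $S_{p+1}$; iterating down the (finite) principal series if necessary, $x \N \theta$. Hence $S_p \setminus S_{p+1}$ is entirely in the $\theta$-class.

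For part (1): if $S_p / S_{p+1}$ is not $\textbf{CS}$-diagonal, I claim that no element of $S_p \setminus S_{p+1}$ can satisfy $a \not\N \theta$ \emph{with $S_p/S_{p+1}$ itself being its own witness}. Concretely, since the factor is not $\textbf{CS}$-diagonal there are indices with $p_{j,i}, p_{j',i}, p_{j,i'}$ nonzero but $p_{j',i'} = \theta$; the computation already carried out inside the proof of Proposition~\ref{regular Rees matrix semigroup} (and repeated in Lemma~\ref{diagonal T}) shows $(1;i,j) \N \theta$, i.e. there \emph{is} an element of $S_p \setminus S_{p+1}$ that is $\N$-related to $\theta$. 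Two subcases: either that $\theta$ is the zero of the factor, in which case $\theta \in S_{p+1}$ and part (2) applies to put all of $S_p \setminus S_{p+1}$ into the $\theta$-class of $S$; or—if the factor is completely $0$-simple with its zero identified with some element of $S_{p+1}$—the same argument gives $x \N (\text{element of } S_{p+1})$ for all $x$, and then by Lemma~\ref{diagonal T} applied to the smallest such factor below, either the class is $\{\theta\}$ of $S$ or it is the $\N$-class of a $\textbf{CS}$-diagonal factor $S_q/S_{q+1}$ with $q > p$. Either way the dichotomy in (1) holds.

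For part (3): this is the cleanest. If every $S_q/S_{q+1}$ with $q \geq p$ is null or non-$\textbf{CS}$-diagonal, apply Lemma~\ref{diagonal T} to $S_p/S_{p+1}$: the lemma produces, whenever some element is $\not\N\theta$, an index $p' \geq p$ with $S_{p'}/S_{p'+1}$ non-null \emph{and} $\textbf{CS}$-diagonal, contradicting the hypothesis. Hence every element of $S_p \setminus S_{p+1}$ satisfies $a \N \theta$, and part (2) finishes it. The only mild subtlety—and the step I would be most careful about—is the bookkeeping at the bottom of the principal series: when I say ``$x$ is in the $\N$-class of an element of $S_{p+1}$'' I must push this all the way down to $S_{o+1} = \emptyset$, i.e. to the zero $\theta$ of $S$ (or observe $S$ has a zero in the first place); this is routine by downward induction on the series but must be stated, since ``$S_{p+1}$'' is not a single $\N$-class until one reaches a null or trivial bottom factor. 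Everything else is a direct quotation of the lemma.

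\begin{proof}
All three statements follow from Lemma~\ref{diagonal T}; we argue by reducing to it.

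\emph{(2)} Suppose $a \in S_p \setminus S_{p+1}$ with $a \N \theta$. Write $S_p/S_{p+1} \cong \mathcal{M}^{0}(G,n,m;P)$ (or a completely simple Rees matrix semigroup, the same argument applying verbatim), and let $x \in S_p \setminus S_{p+1}$ be arbitrary, say $a = (g;\alpha,\beta)$ and $x = (h;\gamma,\lambda)$. By regularity choose $\alpha',\beta'$ with $p_{\alpha',\alpha}, p_{\beta,\beta'} \neq \theta$ and set
$A = (hg^{-1}p_{\alpha',\alpha}^{-1};\gamma,\alpha')$, $B = (p_{\beta,\beta'}^{-1};\beta',\lambda)$, exactly as in the proof of Lemma~\ref{diagonal T}. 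Then $AaB = (h;\gamma,\lambda) = x$, and since $\N$ is a congruence and $a\N\theta$ we get $x = AaB \,\N\, A\theta B$, with $A\theta B \in S_{p+1}$. Thus every element of $S_p \setminus S_{p+1}$ is $\N$-equivalent to an element of $S_{p+1}$. Descending the principal series $S = S_1 \supset \cdots \supset S_{o} \supset S_{o+1} = \emptyset$ (so that $S$ has a zero $\theta$, the unique element of the bottom non-empty ideal, which then absorbs the whole chain), a finite iteration of the same observation gives $x \N \theta$. Hence $S_p \setminus S_{p+1}$ lies in the $\theta$-class of $\N$.

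\emph{(1)} Assume $S_p/S_{p+1}$ is not $\textbf{CS}$-diagonal. If some $a \in S_p \setminus S_{p+1}$ has $a \not\N \theta$, apply Lemma~\ref{diagonal T} to obtain $p' \geq p$ such that $S_{p'}/S_{p'+1}$ is $\textbf{CS}$-diagonal and non-null and every $x \in S_p \setminus S_{p+1}$ is $\N$-equivalent to some $y \in S_{p'}\setminus S_{p'+1}$; moreover $p' \neq p$ since $S_p/S_{p+1}$ itself is not $\textbf{CS}$-diagonal, so $p' > p$ and $S_p \setminus S_{p+1}$ lies in the $\N$-class of the factor $S_{p'}/S_{p'+1}$. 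Otherwise every element of $S_p \setminus S_{p+1}$ is $\N$-equivalent to $\theta$, i.e. $S_p \setminus S_{p+1}$ is in the $\theta$-class. This is the asserted dichotomy.

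\emph{(3)} Suppose every $S_q/S_{q+1}$ with $q \geq p$ is null or not $\textbf{CS}$-diagonal. If some $a \in S_p \setminus S_{p+1}$ had $a \not\N\theta$, Lemma~\ref{diagonal T} would produce $p' \geq p$ with $S_{p'}/S_{p'+1}$ non-null and $\textbf{CS}$-diagonal, contradicting the hypothesis. Hence $a \N \theta$ for every $a \in S_p \setminus S_{p+1}$, so by part (2) the subset $S_p \setminus S_{p+1}$ is in the $\theta$-class of $\N$.
\end{proof}
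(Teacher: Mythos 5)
Your proof is correct and follows the same route the paper intends: the paper offers no written proof, declaring the corollary an immediate consequence of Lemma~\ref{diagonal T}, and your argument is precisely the natural unfolding of that (the conjugation $x=AaB$ for part (2), and the dichotomy/contradiction with properties (1) and (3) of the lemma for parts (1) and (3)). One small remark: in part (2) the ``descent down the principal series'' is unnecessary and, read literally, would not iterate on its own --- but since $\theta$ is the zero of $S$ you have $A\theta B=\theta$ outright, so the single step $x=AaB\,\N\,A\theta B=\theta$ that you already wrote finishes the argument.
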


For every principal factor $S_p / S_{p+1}$ of a finite semigroup $S$ that it is not null, we say a principal factor $S_{p'} / S_{p'+1}$ is the $\N$-root of $S_p / S_{p+1}$ if it satisfies the properties of Lemma~\ref{diagonal T}. If there exists an element $a \in S_p \setminus S_{p+1}$ such that $a \N\theta$, then we say that $\theta$ is the $\N$-root of $S_p / S_{p+1}$. If $\theta$ is the $\N$-root of $S_p / S_{p+1}$, by Corollary~\ref{diagonal T'}, the subset $S_p \setminus S_{p+1}$ is in the class of $\theta$ of $\N$.

Suppose that $S_p / S_{p+1}$ isomorphic with a regular Rees matrix semigroup $\mathcal{M}^{0}(G,n,m;P)$ and $S_{p'} \setminus S_{p'+1}$ isomorphic with a regular Rees matrix semigroup $\mathcal{M}^{0}(H,n',m';Q)$. We define two functions 
$$\phi_{\N}: \{1,\ldots, n\} \rightarrow \{\mathbb{I}_{1}, \ldots,  \mathbb{I}_{{n'}_{\N}}\}$$ and
$$\phi'_{\N}: \{1,\ldots, m\} \rightarrow \{\mathbb{I'}_{1}, \ldots,  \mathbb{I'}_{{n'}_{\N}}\}$$
such that if $(g;\alpha,\beta)  \N (h;i,j)$ for $(g;\alpha,\beta) \in \mathcal{M}^{0}(G,n,m;P)$ and $(h;i,j) \in \mathcal{M}^{0}(H,n',m';Q)$, then
$$\phi_{\N}(\alpha)=\mathbb{I}_{r} \mbox{~and~} \phi'_{\N}(\beta)=\mathbb{I'}_{s}$$
if $i \in \mathbb{I}_{r}$ and $j \in \mathbb{I'}_{s}$ for some $ 1 \leq r,s \leq {n'}_{\N}$. By Lemma~\ref{diagonal T}.(1) and \ref{diagonal T}.(4) $\phi_{\N}$ and $\phi'_{\N}$ are function.

Jespers and the author \cite{Jes-Sha3} have defined a \C\ representation of $S$ when $S$ has a proper inverse Rees matrix semigroup ideal. In this paper, we extend this definition to the following case.

Suppose that $S$ has a proper $\textbf{CS}$-diagonal ideal $M=\mathcal{M}^{0}(G,m,n;P)$ such that $$M/\N \cong \mathcal{M}^0(G_{\N},n_{\N},n_{\N};I_{n_{\N}}).$$
We define the representation (a semigroup homomorphism) 
  $$\Gamma : S\longrightarrow \mathcal{T}_{\{1, \ldots, n_{\N}\} \cup \{\theta\}} ,$$
where $\mathcal{T}$ denotes  the full
transformation semigroup $\mathcal{T}_{\{1, \ldots, n_{\N}\} \cup
\{\theta\}}$ on the set $\{1, \ldots, n_{\N}\} \cup \{\theta\}$.
The definition is as follows, for $1\leq i\leq n_{\N}$ and $s\in S$,
   $$\Gamma(s)(i) = \left\{ \begin{array}{ll}
      i' & \mbox{if} ~s(g;\alpha,\beta)=(g';\alpha',\beta)  ~ \mbox{for some} ~ g, g' \in G, \, \alpha \in \mathbb{I}_i, \alpha'\in \mathbb{I}_{i'},\\ &1\leq \beta\leq m\\
       \theta & \mbox{otherwise}\end{array} \right.$$
and
      $$\Gamma (s)(\theta ) =\theta .$$
The representation $\Gamma$ is called the \C\ representation of $S$.

We prove that $\Gamma(s)$ is well-defined for every $s\in S$. Suppose that $s(g;\alpha,\beta)=(g';\alpha',\beta)$ for some $g, g' \in G$, $\alpha \in \mathbb{I}_i, \alpha'\in \mathbb{I}_{i'}, 1\leq \beta\leq m$ and $s(h;\gamma,\lambda)=(h';\gamma',\lambda)$ for some $h, h' \in G$, $\gamma \in \mathbb{I}_i, \gamma'\in \mathbb{I}_{i''}, 1\leq \lambda\leq m$. Let $\kappa \in \mathbb{I'}_{i'}$. 
As $(1;\alpha,\kappa)(g';\alpha',\beta)\neq \theta$, $(1;\alpha,\kappa)s\neq \theta$ and thus $(1;\alpha,\kappa)s=(k;\alpha,\kappa')$ for some $k\in G$ and $\kappa' \in \mathbb{I'}_{i}$. Now as $\gamma \in \mathbb{I}_i$, $(1;\alpha,\kappa)s(h;\gamma,\lambda)\neq \theta$ and thus $(1;\alpha,\kappa)(h';\gamma',\lambda)\neq \theta$. Therefore $\gamma'\in \mathbb{I}_{i'}$ and $i'=i''$.

We also claim that for $s\in S$ the map $\Gamma (s)$ restricted to the domain $S\setminus \Gamma(s)^{-1}(\theta )$ is injective.
Indeed, suppose $\Gamma(s)(i_1) = \Gamma(s)(i_2)= i$ for $1
\leq i_1, i_2, i \leq n_{\N}$.
Then $s(g;\alpha,\beta)=(g';\alpha',\beta)$ for some $g, g' \in G$, $\alpha \in \mathbb{I}_{i_1}, \alpha'\in \mathbb{I}_{i}, 1\leq \beta\leq m$ and $s(h;\gamma,\lambda)=(h';\gamma',\lambda)$ for some $h, h' \in G$, $\gamma \in \mathbb{I}_{i_2}, \gamma'\in \mathbb{I}_{i}, 1\leq \lambda\leq m$.
Let $\kappa \in \mathbb{I'}_{i}$. 
As $(1;\alpha,\kappa)(g';\alpha',\beta)\neq \theta$, $(1;\alpha,\kappa)s\neq \theta$ and thus $(1;\alpha,\kappa)s=(k;\alpha,\kappa')$ for some $k\in G$ and $\kappa' \in \mathbb{I'}_{i_1}$. Now as $(1;\alpha,\kappa)(h';\gamma',\lambda)\neq \theta$, $(1;\alpha,\kappa)s(h;\gamma,\lambda)\neq \theta$ and thus $(k;\alpha,\kappa')(h;\gamma,\lambda)\neq \theta$.
 Therefore $\gamma\in \mathbb{I}_{i_1}$ and thus $i_1=i_2$.

For every $s \in S$, $\Gamma (s)$ can be written as a product of orbits of the form
$(i_{1}, i_{2}, \ldots, i_{k})$  or of the form $(i_{1}, i_{2},
\ldots, i_{k}, \theta )$, where $1\leq i_{1}, \ldots, i_{k}\leq n_{\N}$.
The notation for  the latter orbit means that $\Gamma (s)(i_{j})=i_{j+1}$ for $1\leq
j\leq  k-1$, $\Gamma  (s)(i_{k})=\theta$, $\Gamma  (s)(\theta ) =\theta$ and
there does not exist $1\leq r \leq n_{\N}$ such that  $\Gamma
(s)(r)=i_{1}$. 
The convention, it is not written orbits of the form $(i, \theta)$ in
the decomposition of  $\Gamma (s)$  if $\Gamma(s)(i) =\theta$ and
$\Gamma(s)(j) \neq i$ for every $1\leq j \leq n_{\N}$ (this is the reason for writing orbits of length one).  If $\Gamma(s)(i)
=\theta$ for every $1\leq i \leq n_{\N}$, then it  simply is denoted $\Gamma
(s)$ as $\theta$.

If $\Gamma(s)(i_1)=i_1'$ and $\Gamma(s)(i_2)=i_2'$, then it will be shown by  $$[\ldots, i_1, i_1', \ldots,
i_2, i_2', \ldots] \sqsubseteq \Gamma(s).$$


\begin{thm} \label{diagonal T2}
Let $p$ be an integer less or equal to $o$ such that the principal factor $S_{p} / S_{p+1}$ is $\textbf{CS}$-diagonal, $S_{p} / S_{p+1}\cong\mathcal{M}^0(G,n,m;P)$ and  $(S_{p} / S_{p+1})/\N \cong \mathcal{M}^0(G_{\N},n_{\N},n_{\N};I_{n_{\N}})$.
If there exist integers $k_1, k_2,k_3$ and $k_4$ between $1$ and $n_{\N}$ and $v_1, v_2 \in S \setminus S_{p+1}$ which $k_1 \neq k_3$ and $$[\ldots, k_1, k_2, \ldots, k_3,k_4, \ldots]\sqsubseteq \Gamma(v_1), [\ldots, k_1, k_4, \ldots, k_3, k_2, \ldots] \sqsubseteq \Gamma(v_2)$$ where the representation $\Gamma$ is a \C\ representation of $S/S_{p+1}$, then one of the following properties hold:
\begin{enumerate}
\item The $\N$-root $S_{p'} / S_{p'+1}$ of the principal factor $S_{p} / S_{p+1}$ is such that $p'\neq p$, and
$\phi_{\N}(k_1)=\phi_{\N}(k_3)$, $\phi_{\N}(k_2)=\phi_{\N}(k_4)$. 
\item The $\N$-root of the principal factor $S_{p} / S_{p+1}$ is $\theta$.
\end{enumerate}
\end{thm}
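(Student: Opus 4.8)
The plan is to exploit the two orbit patterns of $\Gamma(v_1)$ and $\Gamma(v_2)$ to produce an $\eta_2$-relation (hence an $\N$-relation) between two elements of $S_p\setminus S_{p+1}$ that forces either $\phi_{\N}$ to identify the indicated indices, or forces the whole factor to collapse to $\theta$. First I would fix notation: let $(g;\alpha,\beta)$ be a nonzero element of $M:=S_p/S_{p+1}\cong\mathcal M^0(G,n,m;P)$ with $\alpha\in\mathbb I_{k_1}$, and use the hypothesis $[\ldots,k_1,k_2,\ldots,k_3,k_4,\ldots]\sqsubseteq\Gamma(v_1)$, $[\ldots,k_1,k_4,\ldots,k_3,k_2,\ldots]\sqsubseteq\Gamma(v_2)$ to write down, via the definition of the \C\ representation, explicit products $v_1(g;\alpha,\beta)=(g_1;\alpha_1,\beta)$ with $\alpha_1\in\mathbb I_{k_2}$ and $v_1(h;\gamma,\lambda)=(h_1;\gamma_1,\lambda)$ with $\gamma\in\mathbb I_{k_3}$, $\gamma_1\in\mathbb I_{k_4}$, and the analogous products for $v_2$ that swap the roles of $k_2$ and $k_4$.

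The key step is to build, from $v_1,v_2$ and suitable elements of $M$ (of the shape $(1;a,b)$ with $a,b$ chosen in the appropriate $\mathbb I$, $\mathbb I'$ classes), two elements $x,y\in S_p\setminus S_{p+1}$ together with $w_1,w_2\in S^1$ realizing $x=\lambda_2(x,y,w_1,w_2)$, $y=\rho_2(x,y,w_1,w_2)$, i.e. $x\eta_2 y$ in $S/S_{p+1}$ — the point being that $\Gamma(v_1)$ moves a coordinate the ``forward'' way and $\Gamma(v_2)$ moves it the ``reverse'' way, so that the two products $xw_1y$ and $yw_1x$ (and their iterates) land back in the same $\textbf R$-, $\textbf L$-classes and differ only by a group coordinate, exactly as in the proof that non-$\textbf{CS}$-diagonal Rees matrix semigroups collapse (Proposition~\ref{regular Rees matrix semigroup}) and in the argument of Lemma~\ref{diagonal T}. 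Since $\eta_2\subseteq\N$ and $\N$ is a congruence, pre- and post-multiplying by fixed elements of $M$ then yields $(g;\alpha,\beta)\,\N\,(g';\alpha',\beta)$ for suitable $g'$ and $\alpha'\in\mathbb I_{k_3}$ while at the same time $(g;\alpha,\beta)$ is $\N$-related to an element with first index in $\mathbb I_{k_1}$; that is, two elements of $M$ in distinct $\mathbb I$-classes $\mathbb I_{k_1}\neq\mathbb I_{k_3}$ become $\N$-identified, and likewise $\mathbb I_{k_2}$ with $\mathbb I_{k_4}$.

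Now I would invoke Lemma~\ref{diagonal T}: let $S_{p'}/S_{p'+1}$ be the $\N$-root of $S_p/S_{p+1}$ (which exists, since $S_p/S_{p+1}$ is non-null). If the $\N$-root is $\theta$ we are in case (2) and there is nothing more to prove. Otherwise, by Lemma~\ref{diagonal T}.(1) every element of $S_p\setminus S_{p+1}$ is $\N$-equivalent to an element of $S_{p'}\setminus S_{p'+1}$, and by Lemma~\ref{diagonal T}.(4) together with the well-definedness of $\phi_{\N}$ the $\N$-class determines the $\mathbb I$-class of the root via $\phi_{\N}$; hence the $\N$-identifications just produced translate into $\phi_{\N}(k_1)=\phi_{\N}(k_3)$ and $\phi_{\N}(k_2)=\phi_{\N}(k_4)$. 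It remains to rule out $p'=p$: if $p'=p$ then $S_p/S_{p+1}$ is its own $\N$-root, so by Proposition~\ref{regular Rees matrix semigroup} its $\N$-quotient is $\mathcal M^0(G_{\N},n_{\N},n_{\N};I_{n_{\N}})$ with $\phi_{\N}$ the identity on classes, contradicting $\phi_{\N}(k_1)=\phi_{\N}(k_3)$ with $k_1\neq k_3$; therefore $p'\neq p$, which is case (1).

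\textbf{Main obstacle.} The delicate part is the explicit bookkeeping in the middle step — turning the abstract orbit containments $[\ldots]\sqsubseteq\Gamma(v_i)$ into concrete equalities $v_i(g;\alpha,\beta)=(\,\cdot\,;\alpha',\beta)$ and then assembling $w_1,w_2$ and the ambient multipliers so that the $\lambda_2/\rho_2$ identities hold \emph{on the nose} in $S/S_{p+1}$ (not merely up to the ideal), taking care that all intermediate products remain outside $S_{p+1}$ so that the sandwich entries used are genuinely nonzero; once those identities are in hand, everything else is a direct appeal to the congruence property of $\N$, to Lemma~\ref{diagonal T}, and to Proposition~\ref{regular Rees matrix semigroup}.
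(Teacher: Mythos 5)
Your overall architecture is right — use $v_1,v_2$ to force an $\N$-identification of elements of $S_p\setminus S_{p+1}$ whose row indices lie in $\mathbb{I}_{k_1}$ and $\mathbb{I}_{k_3}$, then feed this into Lemma~\ref{diagonal T} and the well-definedness of $\phi_{\N}$ — but the step you yourself flag as the ``main obstacle'' is precisely where the proof lives, and your proposed resolution of it does not work as stated. You claim one can choose $x,y\in S_p\setminus S_{p+1}$ and $w_1,w_2\in S^1$ so that $x=\lambda_2(x,y,w_1,w_2)$ and $y=\rho_2(x,y,w_1,w_2)$ hold \emph{on the nose}. This is false in general: the orbit hypotheses on $\Gamma(v_1),\Gamma(v_2)$ only guarantee that after two steps $\lambda_2$ and $\rho_2$ return to the same $\mathbb{I}$- and $\mathbb{I'}$-classes as $x$ and $y$; the group coordinates (and the particular indices inside those classes) need not return. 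The paper's proof closes exactly this gap: starting from $x=(1;a_3,a_2)$, $y=(1;a_1,a_4)$ it forms the sequence of pairs $(\lambda_t,\rho_t)$ for the alternating word $v_1,v_2,v_1,v_2,\dots$, observes that all these pairs stay nonzero in $M$ with the prescribed class pattern (this is the content of $\Gamma(x)=\lambda_2(\Gamma(x),\Gamma(y),\Gamma(v_1),\Gamma(v_2))$, etc.), and then applies pigeonhole to find $t<r$ with $(\lambda_t,\rho_t)=(\lambda_r,\rho_r)$. The resulting pair $w=\lambda_t$, $z=\rho_t$ satisfies the exact fixed-point identities for $m=r-t$, giving $w\,\eta_m\,z$ with $w\neq z$ (they lie over distinct $\mathbb{I}$-classes since $k_1\neq k_3$). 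So the relation you get is $\eta_m$ for some $m$ produced by stabilization, not $\eta_2$; without this iteration-plus-finiteness argument your middle step has no proof.

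Two smaller points. First, your derivation of $p'\neq p$ goes through Proposition~\ref{regular Rees matrix semigroup} applied to $M$ alone, asserting that $\phi_{\N}$ would be ``the identity on classes''; that proposition computes the quotient of $M$ by its \emph{own} congruence $\N$, whereas here $\N$ is the congruence of $S$, which may identify more inside $M$ because of $v_1,v_2$. The intended contradiction is cleaner and is what the paper does: after left-multiplying $w\,\N\,z$ by $(g^{-1};b_1,b'_3)$ with $b'_3\in\mathbb{I'}_{k_3}$, one product is the nonzero element $(p_{b'_3,b_3};b_1,b_2)$ of $S_p\setminus S_{p+1}$ while the other falls into $S_{p+1}$ (since $p_{b'_3,b_1}=\theta$), so an element of $S_p\setminus S_{p+1}$ is $\N$-related into $S_{p+1}$, which contradicts property (2) of Lemma~\ref{diagonal T} if $p'=p$. (Alternatively, well-definedness of $\phi_{\N}$ gives the contradiction directly from $w\,\N\,z$; but that is Lemma~\ref{diagonal T}, not Proposition~\ref{regular Rees matrix semigroup}.) Second, the same left-multiplication, combined with Lemma~\ref{diagonal T}.(1) and (4), is what actually yields $\phi_{\N}(k_1)=\phi_{\N}(k_3)$ and $\phi_{\N}(k_2)=\phi_{\N}(k_4)$; your appeal to ``the $\N$-class determines the $\mathbb{I}$-class of the root'' is the right idea but needs this concrete computation to be applied to the specific pair $(w,z)$.
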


\begin{proof}
First suppose that there exists an element $a \in S_{p} \setminus S_{p+1}$ such that $a \N \theta$. Then by Corollary~\ref{diagonal T'}.(2), $\theta$ is the $\N$-root of $S_p / S_{p+1}$.

Now suppose that there exists an element $a \in S_{p} \setminus S_{p+1}$ such that $a \not\N \theta$. Then by Lemma~\ref{diagonal T}.(1), $S_{p} / S_{p+1}$ has a $\N$-root $S_{p'} / S_{p'+1}$.
Let $x=(1;a_3,a_2)$ and $y=(1;a_1,a_4)$ such that $a_1\in \mathbb{I}_{k_1},a_3\in\mathbb{I}_{k_3},a_2\in\mathbb{I'}_{k_2}$ and $a_4\in\mathbb{I'}_{k_4}$.
Since  \begin{eqnarray*}
\Gamma(x)&=&\lambda_2(\Gamma(x),\Gamma(y),\Gamma(v_1),\Gamma(v_2)), \label{v1}\\
\Gamma(y)&=&\rho_2(\Gamma(x),\Gamma(y),\Gamma(v_1),\Gamma(v_2)) \label{v2}
\end{eqnarray*}
and since $n_{\N} \times n_{\N}$ and $G \times G$ are finite, there exist positive integers $t$ and $r$
  such that $t< r$ and
 \begin{eqnarray*}
 \lefteqn{
(\lambda_{t}(x,y,v_{1},v_2,v_1,v_2,\ldots),
 \rho_{t}(x,y,v_{1},v_2,v_1,v_2,\ldots)) } \\&=&
(\lambda_{r}(x,y,v_{1},v_2,v_1,v_2, \ldots), \rho_{r}(x,y,v_{1},v_2,v_1,v_2,\ldots)).
 \end{eqnarray*}
Put $w= \lambda_{t}(x,y,v_{1},v_2,v_1,v_2,\ldots)$, $z=
\rho_{t}(x,y,v_{1},v_2,v_1,v_2,\ldots)$ and $m=r-t$. Then $w =
\lambda_{m}(w, z, v_{1},v_2,v_1,v_2,\ldots) \neq z = \rho_{m}(w,
z, v_{1},v_2,v_1,v_2,\ldots)$ or $w =
\lambda_{m}(w, z, v_2,v_1,v_2,v_{1},\ldots) \neq z = \rho_{m}(w,
z,v_2,v_1,v_2,v_{1},\ldots)$ and thus $w \N z$. Since $x=(1;a_3,a_2)$ and $y=(1;a_1,a_4)$, there exist elements $g,g' \in G$, $b_1\in \mathbb{I}_{k_1},b_3\in\mathbb{I}_{k_3},b_2\in\mathbb{I'}_{k_2}$ and $b_4\in\mathbb{I'}_{k_4}$ such that $w=(g;b_3,b_2)$ and $z=(g';b_1,b_4)$ or $w=(g;b_3,b_4)$ and $z=(g';b_1,b_2)$.

We suppose that $w=(g;b_3,b_2)$ and $z=(g';b_1,b_4)$. Let $b'_3\in\mathbb{I'}_{k_3} $.
Since the relation $\N$ is congruence,
$$ (g^{-1};b_1,b'_3)(g;b_3,b_2) \N (g^{-1};b_1,b'_3)(g';b_1,b_4).$$
As $(g^{-1};b_1,b'_3)(g';b_1,b_4)\not \in S_{p} / S_{p+1}$ and $(p_{b'_3,b_3};b_1,b_2) \N (g^{-1};b_1,b'_3)(g';b_1,b_4)$, $p\neq p'$.

Now by Lemma~\ref{diagonal T}.(1), there exist elements $\alpha_1, \alpha_2 \in S_{p'} \setminus S_{p'+1}$ such that $(g^{-1};b_1,b'_3)\N\alpha_1$ and $(g';b_1,b_4)\N\alpha_2$. 
Since $$(p_{b'_3,b_3};b_1,b_2) = (g^{-1};b_1,b'_3)(g;b_3,b_2) \neq \theta$$ and $(p_{b'_3,b_3};b_1,b_2) \N \alpha_1\alpha_2$, $\phi_{\N}(b_1)=\phi_{\N}(b_3)$.
Similarly $\phi_{\N}(b_2)=\phi_{\N}(b_4)$.

If $w=(g;b_3,b_4)$ and $z=(g';b_1,b_2)$, in the similar way, the result follows.
\end{proof}

\begin{cor} \label{diagonal T2'}
Let $S= \mathcal{M}^0(G,n,m;P) \cup T$ be a finite semigroup that is the union of the ideal $M=\mathcal{M}^0(G,n,m;P)$ and the subsemigroup $T$. If the ideal $M$ is $\textbf{CS}$-diagonal, $M/\N \cong \mathcal{M}^0(G_{\N},n_{\N},n_{\N};I_{n_{\N}})$ and there exist integers $k_1, k_2,k_3$ and $k_4$ between $1$ and $n_{\N}$ and $v_1, v_2 \in T$ which  $k_1 \neq k_3$ and 
$$[\ldots, k_1, k_2, \ldots, k_3,k_4, \ldots]\sqsubseteq \Gamma(v_1), [\ldots, k_1, k_4, \ldots, k_3, k_2, \ldots] \sqsubseteq \Gamma(v_2),$$  then $\theta$ is the $\N$-root of the subsemigroup $M$.
\end{cor}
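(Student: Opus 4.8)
The plan is to reduce Corollary~\ref{diagonal T2'} to Theorem~\ref{diagonal T2} by realising $M$ as a term $S_p$ of a suitable principal series of $S$ and then excluding alternative~(1) of that theorem.

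First I would observe that, since $M=\mathcal{M}^0(G,n,m;P)$ is completely $0$-simple, its zero $\theta$ is a zero of the whole of $S$: for $s\in S$ we have $s\theta\in M$ because $M$ is an ideal, and then $s\theta=s(\theta\theta)=(s\theta)\theta=\theta$ because $\theta$ is a right zero of $M$ and $s\theta\in M$; dually $\theta s=\theta$. Hence $\{\theta\}$ is an ideal of $S$, and no ideal of $S$ lies strictly between $M$ and $\{\theta\}$, since such an ideal would be an ideal of $M$ distinct from $\emptyset,\{\theta\},M$, of which a completely $0$-simple semigroup has none. Refining the chain $S\supseteq M$ to a principal series and appending $M\supset\{\theta\}\supset\emptyset$, I obtain a principal series
$$S=S_1\supset\cdots\supset S_p=M\supset S_{p+1}=\{\theta\}\supset S_{p+2}=\emptyset .$$
As a Rees quotient by its zero ideal, $S_p/S_{p+1}$ is isomorphic to $M$, so it is $\textbf{CS}$-diagonal and $(S_p/S_{p+1})/\N\cong M/\N\cong\mathcal{M}^0(G_{\N},n_{\N},n_{\N};I_{n_{\N}})$; moreover $S/S_{p+1}=S/\{\theta\}\cong S$, so the $\textbf{CS}$-diagonal representation of $S/S_{p+1}$ attached to the (proper) ideal $M$ is precisely the $\Gamma$ of the statement.

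Next I would check that the combinatorial hypothesis of Theorem~\ref{diagonal T2} holds with the same $k_1,k_2,k_3,k_4$ and $v_1,v_2$. The transformation $\Gamma(\theta)$ sends everything to $\theta$, hence contains no orbit of the shape $[\ldots,k_1,k_2,\ldots,k_3,k_4,\ldots]$ with $k_1\neq k_3$; therefore the assumptions $[\ldots,k_1,k_2,\ldots,k_3,k_4,\ldots]\sqsubseteq\Gamma(v_1)$ and $[\ldots,k_1,k_4,\ldots,k_3,k_2,\ldots]\sqsubseteq\Gamma(v_2)$ force $v_1,v_2\neq\theta$, so $v_1,v_2\in T\setminus\{\theta\}\subseteq S\setminus S_{p+1}$, as Theorem~\ref{diagonal T2} requires. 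That theorem then applies and gives that either its alternative~(1) or its alternative~(2) holds.

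Finally I would rule out alternative~(1). In Lemma~\ref{diagonal T} the integer $p'$ satisfies $p'\ge p$ (take $b=a$), so if, as in alternative~(1), $p'\neq p$, then $p'>p$, whence $S_{p'}\subsetneq S_p=M$ and therefore $S_{p'}\subseteq S_{p+1}=\{\theta\}$, forcing $S_{p'}=\{\theta\}$ and $S_{p'}/S_{p'+1}$ to be the one-element null semigroup. This contradicts the fact that an $\N$-root is a non-null principal factor (Lemma~\ref{diagonal T}). Hence alternative~(2) holds: $\theta$ is the $\N$-root of $S_p/S_{p+1}\cong M$, which is the assertion (and, via Corollary~\ref{diagonal T'}.(2), all of $M\setminus\{\theta\}$ then lies in the $\theta$-class of $\N$). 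I do not expect a genuine obstacle here; the only point demanding care is the principal-series bookkeeping — verifying that $\{\theta\}$ is an ideal of $S$ and that nothing sits strictly between $M$ and $\{\theta\}$ — after which the conclusion is a direct invocation of Theorem~\ref{diagonal T2}.
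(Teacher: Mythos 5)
Your proposal is correct and follows the paper's route: the paper's entire proof is ``This follows at once from Theorem~\ref{diagonal T2},'' and your argument simply makes explicit the intended reduction (building the principal series $S\supset\cdots\supset M\supset\{\theta\}\supset\emptyset$ and ruling out alternative~(1) because $p'>p$ would force the $\N$-root to sit inside $\{\theta\}$). The bookkeeping you supply is sound and is exactly what the paper leaves implicit.
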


\begin{proof}
This follows at once from Theorem~\ref{diagonal T2}.
\end{proof}


Note that if a principal factor $S_{p} / S_{p+1}$ of a finite semigroup $S$ is $\textbf{CS}$-diagonal, $(S_{p} / S_{p+1})/\N \cong \mathcal{M}^0(G_{\N},n_{\N},n_{\N};I_{n_{\N}})$ and there exist integers $k_1, k_2,k_3$ and $k_4$ between $1$ and $n_{\N}$
and $v_1, v_2 \in S \setminus S_{p+1}$ such that $$[\ldots, k_1, k_2, \ldots, k_3,k_4, \ldots]\sqsubseteq \Gamma(v_1), [\ldots, k_1, k_4, \ldots, k_3, k_2, \ldots] \sqsubseteq \Gamma(v_2),$$ then in general the subset $S_{p} \setminus S_{p+1}$ is not in the class of $\theta$ of $\N$.

For example, let $S$ be a finite semigroup with principal series:
$$S= S_1 \supset S_2 \supset S_3 \supset S_4 \supset S_5= \emptyset$$
\psscalebox{.7 .7} 
{
\begin{pspicture}(0,-2.49)(17.26,2.49)
\psellipse[linecolor=black, linewidth=0.04, dimen=outer](8.63,0.0)(8.63,2.49)
\psellipse[linecolor=black, linewidth=0.04, dimen=outer](7.18,0.02)(6.98,1.83)
\psellipse[linecolor=black, linewidth=0.04, dimen=outer](4.83,-0.13)(4.25,1.16)
\psellipse[linecolor=black, linewidth=0.04, dimen=outer](1.93,-0.19)(0.89,0.4)
\rput[bl](14.36,2.05){$S_1$}
\rput[bl](11.84,1.43){$S_2$}
\rput[bl](8.86,0.33){$S_3$}
\rput[bl](2.04,0.29){$S_4$}
\rput[bl](1.58,-0.35){$\{\theta\}$}
\rput[bl](2.64,-0.87){$S_3/S_4= \mathcal{M}^0(\{1\},\{a,b\},\{a,b\};I_2)$}
\rput[bl](4.32,1.19){$S_2 / S_3= \mathcal{M}^0(\{1\},\{i,j,k,l\},\{i,j,k,l\};I_4)$}
\rput[bl](14.28,0.29){$S_1 / S_2= \{v_1,v_2\}$}
\end{pspicture}
}

\noindent whose principal factors are as follows:
$$S_4= \{\theta\},S_3 / S_4= \mathcal{M}^0(\{1\},\{a,b\},\{a,b\};I_2),$$ $$S_2 / S_3= \mathcal{M}^0(\{1\},\{i,j,k,l\},\{i,j,k,l\};I_4)$$ and a null principal factor $S_1 / S_2= \{v_1,v_2\}$. As the maximal groups of $M_1=\mathcal{M}^0(\{1\},\{a,b\},\{a,b\};I_2)$ and $M_2=\mathcal{M}^0(\{1\},\{i,j,k,l\},\{i,j,k,l\};I_4)$ are trivial, we write the elements of $M_1$ as $(\alpha,\beta)$ for $\alpha,\beta \in \{a,b\}$ and those of $M_2$ as $[\alpha,\beta]$ for $\alpha,\beta \in \{i,j,k,l\}$. Let the function $\phi:\{i,j,k,l\}\rightarrow\{a,b\}$ be given by $\phi(i)=\phi(j)=a,\phi(k)=\phi(l)=b$. We impose the following relations on $S$: 
\begin{enumerate}
    \item[$\bullet$] for every $\alpha,\beta \in \{a,b\}$ and $\gamma,\lambda \in \{i,j,k,l\}$,
$$(\alpha,\beta)[\gamma,\lambda]=(\alpha,\beta)(\phi(\gamma),\phi(\lambda)),$$
$$[\gamma,\lambda](\alpha,\beta)=(\phi(\gamma),\phi(\lambda))(\alpha,\beta),$$
  \item[$\bullet$] for every $\alpha,\beta, \gamma,\lambda \in \{i,j,k,l\}$ if $\beta \neq \gamma$,
$$[\alpha,\beta][\gamma,\lambda]=(\phi(\alpha),\phi(\beta))(\phi(\gamma),\phi(\lambda)),$$
 \item[$\bullet$] for every $\beta \in \{a,b\}$,
$$v_1^2=v_1v_2=v_2v_1=v_2^2=\theta,$$
$$v_1(a,\beta)=v_2(a,\beta)=(b, \beta),v_1(b,\beta)=v_2(b,\beta)=\theta,$$
$$(\beta,a)v_1=(\beta,a)v_2=\theta,(\beta,b)v_1=(\beta,b)v_2=(\beta,a),$$
\item[$\bullet$] for every $\alpha \in \{i,j,k,l\}$, $\beta \in \{k,l\}$ and $\gamma \in \{i,j\},$
$$v_1[i,\alpha]=[l,\alpha],v_1[j,\alpha]=[k,\alpha],v_1[\beta,\alpha]=\theta,$$
$$v_2[i,\alpha]=[k,\alpha],v_2[j,\alpha]=[l,\alpha],v_2[\beta,\alpha]=\theta,$$
$$[\alpha,k]v_1=[\alpha,j],[\alpha,l]v_1=[\alpha,i],[\alpha,\gamma]v_1=\theta,$$
$$[\alpha,k]v_2=[\alpha,i],[\alpha,l]v_2=[\alpha,j],[\alpha,\gamma]v_2=\theta.\footnote{In order to check the associativity law for the constructed example, we used software developed in C++.}$$
\end{enumerate}

Since $\Gamma'(v_1)=(i,l,\theta)(j,k,\theta)$ and $\Gamma'(v_2)=(i,k,\theta)(j,l,\theta)$, $[i,k]\N[j,l]$ where the representation $\Gamma'$ is a \C\ representation of $S/S_3$. Then $[i,i][i,k]\N[i,i][j,l]=(a,b)$ and thus $[i,k]\N(a,b)$. Similarly, it is easy to verify that the $\N$-root of principal factor $S_2 / S_3$ is the principal factor $S_3 / S_4$ and $\phi_{\N}(i)=\phi_{\N}(j)=a,\phi_{\N}(k)=\phi_{\N}(l)=b$.

Then by Lemma~\ref{diagonal T} follows that $\N(S_2\setminus S_3) \subseteq \N(S_3\setminus S_4)$. Since $\Gamma(v_1)=\Gamma(v_2)=(a,b, \theta)$, it is easy to verify that $\mathcal{M}^0(\{1\},\{a,b\},\{a,b\};I_2) \cup \{v_1,v_2\}$ is nilpotent, where the representation $\Gamma$ is a \C\ representation of $S$. 
The $\N$-classes of $S$ are singletons except for the elements $[\gamma,\lambda],\gamma,\lambda \in \{i,j,k,l\}$ and $\theta$ which constitutes a class.

Note that the elements $v_1$ and $v_2$ satisfy the condition of Theorem~\ref{diagonal T2} of the principal factor $S_2/S_3$, but $S_2\setminus S_3$ is not in the class of $\theta$.


{\bf Open Problem.} Does there exist a finite semigroup $S$ such that it has a $\textbf{CS}$-diagonal principal factor $S_{p} / S_{p+1}$ that $$(S_{p} / S_{p+1})/\N \cong \mathcal{M}^0(G_{\N},n_{\N},n_{\N};I_{n_{\N}})$$ with the following properties:
\begin{enumerate}
\item The principal factor $S_{p} / S_{p+1}$ has
a $\N$-root  principal factor  $S_{p'} / S_{p'+1}$ such that $p \neq p'$. 
\item There do not exist exist integers $k_1, k_2,k_3$ and $k_4$ between $1$ and $n_{\N}$ with $k_1 \neq k_3$ and $v_1, v_2 \in S \setminus S_{p+1}$ which $$[\ldots, k_1, k_2, \ldots, k_3,k_4, \ldots]\sqsubseteq \Gamma(v_1), [\ldots, k_1, k_4, \ldots, k_3, k_2, \ldots] \sqsubseteq \Gamma(v_2),$$
that the representation $\Gamma$ is a \C\ representation of $S/S_{p+1}$.
\end{enumerate}

We recall that a semigroup $S$ is semisimple, if every principal factor of $S$ is 0-simple
or simple (\cite{Cli}). 
If the answer to the above open problem is negative, then every $\N$-root principal factor $S_{p} / S_{p+1}$ of a finite semisimple semigroup $S$, is a $\textbf{CS}$-diagonal and such that $(S_{p} / S_{p+1})/\N \cong \mathcal{M}^0(G_{\N},n_{\N},n_{\N};I_{n_{\N}})$ and  there do not exist exist integers $k_1, k_2,k_3$ and $k_4$ between $1$ and $n_{\N}$ with $k_1 \neq k_3$ and $v_1, v_2 \in S \setminus S_{p+1}$ which $$[\ldots, k_1, k_2, \ldots, k_3,k_4, \ldots]\sqsubseteq \Gamma(v_1), [\ldots, k_1, k_4, \ldots, k_3, k_2, \ldots] \sqsubseteq \Gamma(v_2)$$ where the representation $\Gamma$ is a \C\ representation of $S/S_p$.

If a finite semigroup $S$ is semisimple, then the classes of $S/\N$ are $\{\theta\}$ (if $S$ has $\theta$) and the $\N$-classes of $\N$-root principal factors of $S$. Using Proposition~\ref{regular Rees matrix semigroup} can be seen as a result consequence.

Recall that a semigroup is called a block group if each element has at most one inverse (for example \cite{Rho-Ste}). For instance, an inverse semigroup is the same thing as a regular block group. The collection of all finite block groups whose subgroup are nilpotent is a pseudovariety denoted $\textbf{BG}_{nil}$. Also we recall the pseudovariety $$\textbf{BI} = \{S\in \textbf{S}\mid S\mbox{~is block group and all subgroups of $S$ are trivial}\}$$ where $\textbf{S}$ is all finite semigroups.
The following theorem can be seen as a consequence of Proposition~\ref{regular Rees matrix semigroup} and Lemma~\ref{diagonal T}.

\begin{thm} \label{inverse}
Let $S$ be a finite semigroup. The quotient $S/\N$ is in the pseudovariety $\textbf{BG}_{nil}$.
\end{thm}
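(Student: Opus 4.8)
The plan is to verify, for the finite semigroup $S/\N$, the two conditions defining membership in $\textbf{BG}_{nil}$: that $S/\N$ is a block group, and that each of its subgroups is a nilpotent group. The one ingredient needed from outside Section~\ref{approach} is Lemma~\ref{smallest-congruence}, which gives $S/\N\in\textbf{MN}$, i.e.\ $S/\N$ is nilpotent in the sense of Mal'cev; everything else is deduced from the structure results already established. The statement about subgroups is then immediate: any subgroup $G$ of $S/\N$ is a subsemigroup, so $G\in\textbf{MN}$ since the pseudovariety $\textbf{MN}$ is closed under subsemigroups, i.e.\ $G$ is nilpotent in the sense of Mal'cev, and by Corollary~1 of \cite{Neu-Tay} this is equivalent to $G$ being a nilpotent group.

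For the block group property I would argue locally on the regular $\textbf{J}$-classes, using the standard reformulation that a semigroup is a block group precisely when each of its $\textbf{R}$-classes and each of its $\textbf{L}$-classes contains at most one idempotent (see \cite{Rho-Ste}), together with the fact that idempotents occur only inside regular $\textbf{J}$-classes. So let $T_q/T_{q+1}$ be a regular principal factor of $S/\N$; it is completely simple or completely $0$-simple, and, being a Rees quotient of the ideal $T_q$ of $S/\N$, it belongs to $\textbf{MN}$, hence equals its own $\N$-quotient by Lemma~\ref{smallest-congruence}. In the completely simple case Corollary~\ref{completely simple semigroup} identifies $T_q/T_{q+1}$ with a group; in the completely $0$-simple case Proposition~\ref{regular Rees matrix semigroup} forces it to equal $\{\theta\}$, impossible for a non-null principal factor, or to be isomorphic to the Brandt semigroup $\mathcal{M}^0(G_{\N},n_{\N},n_{\N};I_{n_{\N}})$. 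In every case the sandwich matrix is monomial, so each $\textbf{R}$-class and each $\textbf{L}$-class of $T_q/T_{q+1}$ contains exactly one idempotent; hence $S/\N$ is a block group, and with the preceding paragraph $S/\N\in\textbf{BG}_{nil}$.

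An equivalent route, closer to the machinery of Section~\ref{approach} and bypassing principal factors of the quotient, runs the argument along the principal series of $S$ itself: by Lemma~\ref{diagonal T} and the definition of the $\N$-root following Corollary~\ref{diagonal T'}, the $\N$-root of every non-null principal factor $S_p/S_{p+1}$ of $S$ is $\theta$ or a $\textbf{CS}$-diagonal principal factor $S_{p'}/S_{p'+1}$ with $S_p\setminus S_{p+1}$ contained in the $\N$-class of that $\N$-root; Proposition~\ref{regular Rees matrix semigroup} then gives $(S_{p'}/S_{p'+1})/\N\cong\mathcal{M}^0(G_{\N},n_{\N},n_{\N};I_{n_{\N}})$, an inverse (Brandt) semigroup whose maximal subgroup $G_{\N}$, a quotient of the nilpotent group $G/\N$, is nilpotent, while null factors contain no idempotents and may be ignored. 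So every regular $\textbf{J}$-class of $S/\N$ is of this Brandt shape with nilpotent maximal subgroup, which again yields $S/\N\in\textbf{BG}_{nil}$.

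The main obstacle is the bookkeeping at the quotient: one must be sure that passing from $S$ to $S/\N$ creates no regular $\textbf{J}$-class outside the list we control, and that "block group" genuinely reduces to monomiality of the sandwich matrices of the regular principal factors. The first point is delivered by Lemma~\ref{smallest-congruence} together with Proposition~\ref{regular Rees matrix semigroup}, which completely determines the shape of a regular Rees matrix semigroup lying in $\textbf{MN}$; the second is the routine chain of equivalences "at most one inverse" $\Leftrightarrow$ "at most one idempotent per $\textbf{R}$- and $\textbf{L}$-class" $\Leftrightarrow$ "monomial sandwich matrix", valid because an element possessing an inverse is regular and the number of its inverses equals the product of the weights of the corresponding row and column of the sandwich matrix. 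Thus the real content of the theorem is carried by Proposition~\ref{regular Rees matrix semigroup} and Lemma~\ref{diagonal T}, and what is left is only this translation.
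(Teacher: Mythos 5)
Your proposal is correct. Note that the paper itself supplies no written argument for this theorem: it only records the remark that the statement ``can be seen as a consequence of Proposition~\ref{regular Rees matrix semigroup} and Lemma~\ref{diagonal T}'', which is precisely your second route along the principal series of $S$. Your primary route is genuinely different and, to my mind, cleaner: from Lemma~\ref{smallest-congruence} you get $S/\N\in\textbf{MN}$, and since $\textbf{MN}$ is closed under subsemigroups and homomorphic images, every principal factor of $S/\N$ is Mal'cev nilpotent; the Jespers--Okni\'nski characterization quoted in the introduction (together with Corollary~\ref{completely simple semigroup} in the completely simple case) then forces each regular principal factor of $S/\N$ to be a group or a Brandt semigroup over a nilpotent group, which gives both the block-group property (one idempotent per $\textbf{R}$- and $\textbf{L}$-class) and the nilpotency of subgroups. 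This in effect proves the stronger inclusion $\textbf{MN}\subseteq\textbf{BG}_{nil}$ and sidesteps a subtlety present in the route the paper indicates: in your second route, $(S_{p'}/S_{p'+1})/\N$ is computed with respect to the congruence internal to the principal factor, whereas the block-group property of $S/\N$ concerns the image of $S_{p'}\setminus S_{p'+1}$ under the restriction of the congruence of the ambient semigroup $S$, which may a priori be coarser; one must add the observation that a completely $0$-simple homomorphic image of a Brandt semigroup over a nilpotent group is again of that form. Your first argument needs no such repair, so I would keep it as the main proof and relegate the principal-series version to a remark.
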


Note that every finite inverse semigroup whose maximal subgroups are nilpotent may not be nilpotent \cite{Jes-Sha}. Recall the semigroup $\textbf{F}_{7}$ in \cite{Ril} which is the disjoint union of the completely $0$-simple semigroup $\mathcal{M}^0(\{1\},2,2;I_{2})$ and the cyclic group $\{1,u\}$
of order $2$:
$$\textbf{F}_{7} = \mathcal{M}^0(\{1\},2,2;I_{2}) \cup \{1,u\}.$$
The multiplication of $\textbf{F}_{7}$ is defined by extending that of the defining subsemigroups via $\Gamma(1)=(1)(2)$ and $\Gamma(u)=(1,2)$. The semigroup $\textbf{F}_{7}$ is inverse but it is not nilpotent. 

From Theorem~\ref{inverse} it follows that the pseudovariety $\bf{MN}$ is a subpseudovariety of the pseudovariety $\textbf{BG}_{nil}$. 

In fact the collection of finite semigroup with empty upper non-nilpotent graph is a pseudovariety. We denoted it by $\bf{EUNNG}$. 

\begin{thm} \label{EUNNG}
The collection $\bf{EUNNG}$ is a pseudovariety.
\end{thm}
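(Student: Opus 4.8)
The plan is to verify directly the three closure properties defining a pseudovariety: closure under subsemigroups, homomorphic images, and finite direct products. Throughout, a finite semigroup $S$ belongs to $\bf{EUNNG}$ precisely when its upper non-nilpotent graph ${\mathcal N}_S$ has no edges, i.e. for every pair $x,y \in S$ the subsemigroup $\langle x, y\rangle$ is nilpotent in the sense of Mal'cev. Equivalently, $S \in \bf{EUNNG}$ iff every $2$-generated subsemigroup of $S$ is nilpotent. This reformulation is what makes the three checks routine, so I would state it as the first step.

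First I would treat subsemigroups: if $T \leq S$ and $S \in \bf{EUNNG}$, then for $x, y \in T$ the subsemigroup $\langle x, y\rangle$ computed in $T$ coincides with the one computed in $S$, hence is nilpotent; so $T \in \bf{EUNNG}$. Next, homomorphic images: let $\varphi\colon S \to R$ be a surjective homomorphism with $S \in \bf{EUNNG}$, and take $x', y' \in R$; choose preimages $x, y \in S$. Then $\langle x, y\rangle$ is nilpotent, say of class $n$, so the identity $\lambda_n(a,b,c_1,\ldots,c_n) = \rho_n(a,b,c_1,\ldots,c_n)$ holds for all $a, b \in \langle x,y\rangle$ and $c_i \in \langle x,y\rangle^1$; applying $\varphi$ and using that $\varphi(\langle x,y\rangle) = \langle x', y'\rangle$, the same identity holds in $\langle x', y'\rangle$, which is therefore nilpotent. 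Hence $R \in \bf{EUNNG}$. For finite direct products it suffices, by induction, to handle $S = S_1 \times S_2$ with $S_1, S_2 \in \bf{EUNNG}$: given $x = (x_1,x_2)$ and $y = (y_1,y_2)$, the subsemigroup $\langle x, y\rangle$ embeds in $\langle x_1, y_1\rangle \times \langle x_2, y_2\rangle$; if $\langle x_i, y_i\rangle$ has nilpotency class $n_i$, then taking $n = \max(n_1,n_2)$ the Mal'cev identity $\lambda_n = \rho_n$ holds coordinatewise, hence in $\langle x, y\rangle$, so the latter is nilpotent and $S \in \bf{EUNNG}$.

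The one point requiring a little care — and the place I expect a referee to look — is that nilpotency of a 2-generated subsemigroup is witnessed by a single integer $n$ (its nilpotency class) and that the Mal'cev identity of that class genuinely transfers under homomorphisms and survives in subdirect products; this is immediate from the definition of the $\lambda_n, \rho_n$ words as fixed semigroup terms, since any semigroup homomorphism commutes with term evaluation, and in a product a term identity holds iff it holds in each factor. No compactness or uniform-bound argument across all pairs is needed, because the defining condition is quantified pairwise. I would close by remarking that this argument is exactly parallel to the standard verification that $\bf{MN}$, $\bf{NT}$, $\bf{PE}$ are pseudovarieties, the only difference being that here the nilpotency class is allowed to depend on the chosen pair $x, y$ rather than being uniform over $S$.
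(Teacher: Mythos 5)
Your argument is correct, and all three closure checks go through; where it diverges from the paper is in the mechanism used for the two nontrivial closures. You push the Mal'cev identity \emph{forward}: $\langle x,y\rangle$ nilpotent of class $n$ satisfies $\lambda_n=\rho_n$, homomorphisms commute with evaluation of the words $\lambda_n,\rho_n$, so the identity holds in $\varphi(\langle x,y\rangle)=\langle x',y'\rangle$; for products you transfer the identity coordinatewise, using (as you implicitly note) that $\lambda_n=\rho_n$ forces $\lambda_N=\rho_N$ for all $N\ge n$, which justifies taking $n=\max(n_1,n_2)$. The paper instead argues by contradiction and pulls a \emph{non-nilpotency witness backwards}: it applies Lemma~\ref{finite-nilpotent} in the image to get distinct $x,y$ and $w_1,\ldots,w_m$ with $x=\lambda_m(x,y,w_1,\ldots,w_m)$ and $y=\rho_m(x,y,w_1,\ldots,w_m)$, lifts these into $\langle a',b'\rangle$, and uses the repetitions $\lambda_{tm},\rho_{tm}$ so that once $tm$ exceeds the nilpotency class of $\langle a',b'\rangle$ one obtains $f(x')=f(y')$, i.e.\ $x=y$, a contradiction; the product case is handled by projecting such a witness onto one coordinate. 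Your route is shorter and avoids Lemma~\ref{finite-nilpotent} and the repetition trick altogether; its only delicate point, which you correctly flag, is the bookkeeping with adjoined identities (a parameter $c_i'=1\in\langle x',y'\rangle^1$ must be matched by $c_i=1\in\langle x,y\rangle^1$, legitimate because the class-$n$ identity is quantified over $S^1$), whereas the paper's witness-pullback trades that for the length-reconciliation via $\lambda_{tm}$. Both arguments are sound, and both exploit that the defining condition is quantified pairwise, so no uniform bound over $S$ is needed.
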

\begin{proof}
Suppose that the finite semigroup $S$ has empty upper non-nilpotent graph.
If $S'$ is a subsemigroup of $S$ and ${\mathcal N}_{S'}$ is not empty, then there exist elements $a,b \in S'$ such that $\langle a, b \rangle$ is not nilpotent. As $S'\subseteq S$, $a,b \in S$ and thus ${\mathcal N}_{S}$ is not empty, a contradiction.

Suppose that $f:S\rightarrow S'$ is a onto homomorphism and ${\mathcal N}_{S'}$ is not empty. Then there exist elements $a,b \in S'$ such that $\langle a, b \rangle$ is not nilpotent. Therefore by Lemma~\ref{finite-nilpotent}, there exists a positive integer $m$, distinct elements $x, y\in \langle a, b \rangle$ and elements $ w_{1}, w_{2}, \ldots, w_{m}\in \langle a, b \rangle^{1}$ such that $x = \lambda_{m}(x, y, w_{1}, w_{2}, \ldots, w_{m})$, $y = \rho_{m}(x,y, w_{1}, w_{2}, \ldots, w_{m})$. Then for every integer $t>0$, we also have $$x = \lambda_{tm}(x, y, w''_{1},\ldots,w''_{tm}),$$ $$y = \rho_{tm}(x,y, w''_{1},\ldots,w''_{tm})$$ that $w''_{1},\ldots,w''_{tm}$ is repeating $w_{1}, \ldots, w_{m}$, $t$ times. Since $f$ is onto, there exists elements $a',b'\in S$ such that $f(a')=a, f(b')=b$ and thus $f(\langle a', b' \rangle)=\langle f(a'), f(b') \rangle=\langle a, b \rangle$. Then there exists elements $x', y'\in \langle a', b' \rangle$ and elements $ w'_{1}, w'_{2}, \ldots, w'_{m}\in \langle a', b' \rangle^{1}$ such that $f(x')=x, f(y')=y, f(w'_{1})=w_{1}, f(w'_{2})=w_{2}, \ldots, f(w'_{m})=w_{m}$. Then $$f(x') = \lambda_{tm}(f(x'), f(y'), f(w'_{1}), f(w'_{2}), \ldots, f(w'_{m}),\ldots),$$ $$f(y') = \rho_{tm}(f(x'),f(y'), f(w'_{1}), f(w'_{2}), \ldots, f(w'_{m}),\ldots)$$ and thus $$f(x') = f(\lambda_{tm}(x', y', w'_{1}, w'_{2}, \ldots, w'_{m},\ldots)),$$ $$f(y') = f(\rho_{tm}(x',y', w'_{1}, w'_{2}, \ldots, w'_{m},\ldots))$$ for every positive integer $t$. Since the upper non-nilpotent graph of $S$ is empty, $\langle a', b' \rangle$ is nilpotent and thus there exist a positive integer $t$ such that $$\lambda_{tm}(x', y', w'_{1}, w'_{2}, \ldots, w'_{m},\ldots)=\rho_{tm}(x',y', w'_{1}, w'_{2}, \ldots, w'_{m},\ldots).$$ Then $f(x') =f(y')$ and thus $x=y$, a contradiction.

Now suppose that the semigroup $T$ has empty upper non-nilpotent graph. If ${\mathcal N}_{S\times T}$ is not empty, then there exist elements $(a_1,a_2),(b_1,b_2) \in S\times T$ such that $\langle (a_1,a_2),(b_1,b_2) \rangle$ is not nilpotent. Therefore by Lemma~\ref{finite-nilpotent}, there exists a positive integer $m$, distinct elements $(x_1,x_2),(y_1,y_2)\in \langle (a_1,a_2),(b_1,b_2) \rangle$ and elements $(w_1,w'_1), \ldots, (w_m,w'_m)\in \langle (a_1,a_2),(b_1,b_2) \rangle^{1}$ such that $$(x_1,x_2) = \lambda_{m}((x_1,x_2), (y_1,y_2), (w_1,w'_1),  \ldots, (w_m,w'_m)),$$ $$(y_1,y_2) = \rho_{m}((x_1,x_2),(y_1,y_2), (w_1,w'_1), \ldots, (w_m,w'_m)).$$ Therefore $$x_1 = \lambda_{m}(x_1, y_1 w_1,  \ldots, w_m),x_2 = \lambda_{m}(x_2, y_2, w'_1,  \ldots, w'_m),$$ $$y_1 = \rho_{m}(x_1,y_1, w_1, \ldots, w_m) \mbox{~and~} y_2 = \rho_{m}(x_2,y_2, w'_1, \ldots, w'_m).$$ Now as $(x_1,x_2)\neq(y_1,y_2)$, $x_1\neq y_1$ or $x_2\neq y_2$, a contradiction with the upper non-nilpotent graphs of $S$ and $T$ are empty. The result follows.
\end{proof}

It is easy to verify that if the upper non-nilpotent graph of the subgroup is empty, then the subgroup is nilpotent. Combining with \cite[Lemma 2.5]{Jes-Sha}, it follows that $\bf{EUNNG}$ is a subpseudovariety of $\textbf{BG}_{nil}$ and since $\bf{MN}$ is also a subpseudovariety of  $\bf{EUNNG}$, in particular $\bf{MN}$ is a subpseudovariety of  $\textbf{BG}_{nil}$. Note that the pseudovarieties $\bf{MN}$ and $\bf{EUNNG}$ are distinct \cite{Jes-Sha}. By \cite[Theorem 2.6]{Jes-Sha}, $\bf{EUNNG} \subseteq \bf{PE}$. Also by \cite[Corollary 8]{Ril}, a finite semigroup $S$ is positively Engel if and only if all its principal factors are either null semigroups or inverse semigroups over nilpotent groups, and $S$ does not have an epimorphic image with $\textbf{F}_7$ as a subsemigroup. Then $\textbf{PE} \subset \textbf{BG}_{nil}$ and thus we have $$ \textbf{MN} \subset \textbf{EUNNG} \subseteq \textbf{PE} \subset \textbf{BG}_{nil}.$$
It is easy to verify that $  \textbf{BI} \subset \textbf{PE} \subset \textbf{BG}_{nil}$
but $\textbf{MN} \not\subset \textbf{BI}$. Note that $\textbf{BI} \not\subset \textbf{MN}$ \cite{Jes-Sha}. \\


\textbf{Acknowledgments.}
This work was supported, in part, by CMUP (UID /MAT/00144/2013), which is funded by FCT (Portugal)
with national (MCTES) and European structural funds through the programs FEDER, under the partnership agreement PT2020.
It was also supported by the FCT post-doctoral scholarship SFRH/BPD/89812/ 2012. The author thanks Prof. Jorge Almeida and Prof. Bijan Davvaz for their scientific suggestions.

\bibliographystyle{plain}
\bibliography{ref-et}
\end{document}